\title{How regularization affects the critical points in linear networks}
\author{
Amirhossein Taghvaei \\ Coordinated Science Laboratory \\University of Illinois at Urbana-Champaign \\ Urbana, IL, 61801 \\
Email: taghvae2@illinois.edu
\And
Jin W. Kim \\ Coordinated Science Laboratory \\University of Illinois at Urbana-Champaign \\ Urbana, IL, 61801 \\
Email: jkim684@illinois.edu
\And
Prashant G. Mehta \\ Coordinated Science Laboratory \\University of Illinois at Urbana-Champaign \\ Urbana, IL, 61801, \\
Email: mehtapg@illinois.edu
%
}
\def\Re{\mathbb{R}}
\def\A0{{\sf C}}
\def\tA0{\tilde{\sf C}}
\def\tlambda{\tilde{\lambda}}
\newcommand{\sfJ}{{\sf J}}
\def\Sec#1{Sec.~\ref{#1}}
\def\notes#1{\marginpar{\tiny #1}\typeout{Notes!
Notes!
Notes!
}}
\renewcommand{\notes}[1]{\typeout{notes!}}
\newcommand{\field}[1]{\mathbb{#1}}
\newcommand{\tr}{\mbox{tr}}
\def\Re{\field{R}}
\def\Sec#1{Sec.~\ref{#1}}
\def\tf{T}
\def\det{\text{det}}
\newcommand{\Rom}[1]{\expandafter\@slowromancap\romannumeral #1@}
\newcounter{rmnum}
\newenvironment{romannum}{\begin{list}{{\upshape (\roman{rmnum})}}{\usecounter{rmnum}
\setlength{\leftmargin}{17pt}
\setlength{\rightmargin}{4pt}
\setlength{\itemindent}{-1pt}
}}{\end{list}}
\newcounter{anum}
\newcommand{\ud}{\,\mathrm{d}}
\def\E{{\sf E}}
\newcommand{\lr}[2]{\langle #1, #2 \rangle}
\newcommand{\expect}[1]{{\sf E}[#1]}
\newcommand{\newP}[1]{\noindent{\bf #1:}}
\newtheorem{theorem}{Theorem}
\newtheorem{proposition}{Proposition}
\newtheorem{lemma}{Lemma}[section]
\newtheorem{definition}{Definition}
\newtheorem{example}{Example}
\newtheorem{remark}{Remark}
\newsavebox\myboxA
\newsavebox\myboxB
\newlength\mylenA
\newcommand*\xoverline[2][0.75]{%
    \sbox{\myboxA}{$\m@th#2$}%
    \setbox\myboxB\null
    \ht\myboxB=\ht\myboxA%
    \dp\myboxB=\dp\myboxA%
    \wd\myboxB=#1\wd\myboxA
    \sbox\myboxB{$\m@th\overline{\copy\myboxB}$}
    \setlength\mylenA{\the\wd\myboxA}
    \addtolength\mylenA{-\the\wd\myboxB}%
    \ifdim\wd\myboxB<\wd\myboxA%
       \rlap{\hskip 0.5\mylenA\usebox\myboxB}{\usebox\myboxA}%
    \else
        \hskip -0.5\mylenA\rlap{\usebox\myboxA}{\hskip 0.5\mylenA\usebox\myboxB}%
    \fi}
\def\argmax{\mathop{\text{\rm arg\,max}}}
\newcommand{\noise}{\xi}
\newcommand{\tX}{\eta}
\newcommand{\Df}{\mathsf{D}f}
\begin{document}

\maketitle

\begin{abstract}
This paper is concerned with the problem of representing and learning
a linear transformation using a linear neural network.  In recent
years, there has been a growing interest in the study of such networks
in part due to the successes of deep learning.  The main question of
this body of research and also of this paper pertains to the existence and
optimality properties of the critical points of the mean-squared loss
function.  The primary concern here is the robustness of the critical
points with regularization of the loss
function.  An optimal control model is introduced for this purpose and
a learning algorithm (regularized form of backprop) derived for the same
using the Hamilton's formulation of optimal control.  The formulation
is used to provide a complete characterization of the critical points
in terms of the solutions of a nonlinear matrix-valued equation,
referred to as the characteristic equation.  Analytical and numerical
tools from bifurcation theory are used to compute the critical points
via the solutions of the characteristic equation.  The main conclusion
is that the critical point diagram can be fundamentally different even
with arbitrary small amounts of regularization.

\end{abstract}

\section{Introduction}
\label{sec:intro}
This paper is concerned with the problem of representing and learning
a linear transformation with a linear neural network.  
Although a classical problem~(\cite{baldi1989neural,baldi1995learning}),
there has been a renewed interest in such networks
(\cite{DBLP:journals/corr/HardtM16,DBLP:journals/corr/SaxeMG13,kawaguchi2016deep})
because of the successes of deep learning. 
A focus of the recent
research on these (and also nonlinear) networks has been on the analysis
of the critical points of the non-convex loss
function~(\cite{choromanska2015loss,choromanska2015open,dauphin2014identifying,soudry2016no}). This
is also the focus here.

\newP{Problem} The
input-output model is assumed to be of the following linear form:
\begin{equation}
Z = R X_0 + \noise
\label{eq:lin_model}
\end{equation}
where $X_0\in \Re^{d\times 1}$ is the input, $Z\in \Re^{d\times 1}$ is
the output, and $\noise\in \Re^{d\times 1}$ is the noise.  The input $X_0$ is
modeled as a random variable whose distribution is denoted as
$p_0$.  Its second moment is denoted as $\Sigma_0 = \E [X_0
X_0^\top]$ and assumed to be finite.  The noise $\noise$ is assumed to be
independent of $X_0$, with zero mean and finite variance. 
The linear transformation $R \in
M_d(\Re)$ is assumed to satisfy a property (P1) introduced in \Sec{sec:critical_points} ($M_d(\Re)$ denotes the set of 
$d\times d$ matrices).  The problem is to learn the weights of a
linear neural network from i.i.d. input-output samples $\{(X_0^k,Z^k)\}_{k=1}^K$.     

\newP{Solution architecture} is a continuous-time linear feedforward neural network model:
\begin{equation}\label{eq:lin_net}
\frac{\ud X_t}{\ud t} = A_t X_t 
\end{equation}
where $A_t\in M_d(\Re)$ are the network weights indexed by
continuous-time (surrogate for layer) $t\in [0,T]$, and $X_0$ is the initial
condition at time $t=0$ (same as the input data).  The parameter $T$
denotes the network depth.  The optimization problem is to choose the
weights $A_t$ over the time-horizon $[0,\tf]$ to minimize the
mean-squared loss function:
\begin{equation}
\E[|X_\tf - Z|^2] = \E[| X_\tf- R X_0|^2] \;+\; \E[|\noise|^2] 
\label{eq:loss_fn}
\end{equation}
This problem is referred to as the $[\lambda=0]$ problem.  

Backprop is a
stochastic gradient descent algorithm for learning the weights $A_t$.
 In general, one obtains (asymptotic) convergence of the learning algorithm to a (local) minima of the
optimization problem~\cite{2016arXiv160204915L,2015arXiv150302101G}.
This has spurred investigation of the critical points of
the loss function~\eqref{eq:loss_fn} and the optimality properties
(local vs. global minima, saddle points) of these points.  For linear
multilayer (discrete) neural networks (MNN), strong conclusions have
been obtained under rather mild conditions:
every local minimum is a global minimum and every critical point that
is not a local minimum is a saddle
point~\cite{kawaguchi2016deep,baldi1989neural}.  In
experiments, some of these properties are also observed empirically in
deep nonlinear networks; cf.,~\cite{choromanska2015open,dauphin2014identifying,DBLP:journals/corr/SaxeMG13}.  The discrete MNN counterpart of the continuous-time
model~\eqref{eq:lin_net} is the linear residual
network model of~\cite{DBLP:journals/corr/HardtM16}: An Euler
discretization of~\eqref{eq:lin_net} yields the residual 
network.  For such networks, it is shown
in~\cite{DBLP:journals/corr/HardtM16} that, in some neighborhood of
$A_t\equiv 0$, every critical point is a global minimum.

In this paper, the optimization problem is formulated as an optimal control problem:
\begin{equation}
\begin{aligned}
\underset{A}{\text{Minimize:}}&\quad {\sf J}[A]= \expect{\;\frac{\lambda}{2} \int_0^{\tf}
\text{tr}\,(A_t^\top\;A_t)\ud t \;+\; \frac{1}{2}|X_T-Z|^2\;}\\
\text{Subject to:}&\quad \frac{\ud X_t}{\ud t} = A_tX_t, \quad X_0 \sim p_0
\end{aligned}\label{eq:opt_prob}
\end{equation}
where $\lambda \in\mathbb{R}^{+}:=\{x\in\Re \;:\; x\geq 0\}$ is a
regularization parameter.  The limit $\lambda \downarrow 0$ is
referred to as $[\lambda=0^+]$ problem.  The symbol $\tr(\cdot)$ and
superscript $^\top$ are
used to denote matrix trace and matrix transpose, respectively.

The motivation to add the regularization is as follows: It is shown in the paper that the stochastic gradient descent (for the functional
${\sf J}$) yields the following learning algorithm for the weights $A_t$:
\begin{equation}
A^{(k+1)}_t = A_t^{(k)}+\eta_k (-\lambda A_t^{(k)} \, + \, \text{backprop update})
\label{eq:learning_lambda}
\end{equation}
for $k=1,2,\ldots$, where $\eta_k$ is the learning rate
parameter. 
Thus the parameter $\lambda$ models (small) dissipation in backprop.
In an implementation of backprop, one would expect to
obtain critical points of the $[\lambda=0^+]$ problem where the
parameter $\lambda$ is seen to provide implicit regularization.    

The contributions of this paper are as follows: The Hamilton's
formulation is introduced for the optimal control problem in \Sec{sec:Hamilton};
cf.,~\cite{le1988theoretical,farotimi1991general} for related
constructions.  The Hamilton's
equations are used to obtain a formula for the gradient of ${\sf J}$,
and subsequently derive the stochastic gradient descent learning
algorithm of the form~\eqref{eq:learning_lambda}.  The equations for
the critical points of ${\sf J}$ are obtained by applying the Maximum
Principle (Proposition~\ref{prop:opt-ham}).  Remarkably, the
Hamilton's equations for the critical points can be solved in
closed-form to obtain a complete characterization of the critical
points in terms of the solutions of a nonlinear matrix-valued
equation, referred to as the characteristic equation
(Proposition~\ref{prop:opt-vec}).  Analytical results for the normal
matrix case are described based on the use of implicit function
theorem (Theorem~\ref{cor:opt-vec}).  Numerical continuation is
employed to compute these solutions for both normal and non-normal
cases.

\section{Hamilton's formulation and the learning algorithm}
\label{sec:Hamilton}
\begin{definition}
The control Hamiltonian is the function
\begin{equation}
{\sf H}(x,y,B) = y^\top B x -\frac{\lambda}{2} \text{tr}(B^\top\,B) 
\label{eq:H}
\end{equation}
where $x\in\Re^d$ is the state, $y\in\Re^{d}$ is the
co-state, and $B\in M_d(\Re)$ is the weight matrix.
The partial derivatives are denoted as 
$\frac{\partial {\sf H}}{\partial
  x}(x,y,B) := B^\top y$, $\frac{\partial {\sf H}}{\partial
  y}(x,y,B) := B x$, and  
$\frac{\partial {\sf H}}{\partial
  B}(x,y,B) := yx^\top -\lambda B$. 
\end{definition}

Pontryagin's Maximum Principle (MP) is used to obtain the Hamilton's
equations for the optimal solutions.  The MP represents a
necessary condition satisfied by any minimizer of the optimal control
problem~\eqref{eq:opt_prob}.  
Conversely, a solution of the Hamilton's
equation is a critical point of the functional ${\sf J}$.  
The proof of the following proposition appears in the Appendix~\ref{apdx:Hamiltonian}


\begin{proposition}\label{prop:opt-ham}
Consider the terminal cost optimal control
problem~\eqref{eq:opt_prob}.  Suppose $A_t$ is the minimizer and $X_t$ is the
corresponding trajectory. Then there exists a random process
$Y:[0,T]\rightarrow \Re^d$ such that
\begin{align}
\frac{\ud X_t}{\ud t} &= +\frac{\partial {\sf H}}{\partial
  y}(X_t,Y_t,A_t)= +A_t X_t,\quad X_0\sim p_0\label{eq:Xt}\\
\frac{\ud Y_t}{\ud t}&= -\frac{\partial {\sf H}}{\partial
  x}(X_t,Y_t,A_t)=-A_t^\top Y_t,\quad Y_\tf = Z-X_\tf\label{eq:Yt}
\end{align}
and $A_t$ maximizes the expected value of the Hamiltonian
\begin{equation}\label{eq:At}
A_t = \argmax_{B\,\in\, M_d(\Re)} \;\; \E[{\sf H}(X_t,Y_t,B)]=\frac{1}{\lambda} \E[Y_t\,X_t^\top]
\end{equation}
\end{proposition}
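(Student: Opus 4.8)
The plan is to apply Pontryagin's Maximum Principle to the terminal-cost optimal control problem~\eqref{eq:opt_prob}, treating the expectation over $X_0\sim p_0$ carefully since the control $A_t$ is deterministic while the state $X_t$ and co-state $Y_t$ are random. First I would set up the standard variational argument: fix a minimizing control $A_t$ with corresponding state trajectory $X_t$ solving $\tfrac{\ud X_t}{\ud t}=A_tX_t$, and introduce a perturbation $A_t^\epsilon = A_t + \epsilon\, \delta A_t$ for an arbitrary admissible variation $\delta A_t \in M_d(\Re)$. Let $X_t^\epsilon$ denote the perturbed trajectory and define the first-order state variation $\xi_t := \tfrac{\ud}{\ud\epsilon}\big|_{\epsilon=0} X_t^\epsilon$, which satisfies the linearized (variational) equation $\tfrac{\ud \xi_t}{\ud t} = A_t\xi_t + \delta A_t\, X_t$ with $\xi_0 = 0$ since the initial data $X_0$ is not varied.

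\smallskip

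Next I would compute the first variation of the cost $\mathsf{J}[A^\epsilon]$ and set it to zero. The terminal term contributes $\E[(X_\tf - Z)^\top \xi_\tf]$ and the running regularization term contributes $\lambda\int_0^\tf \tr(A_t^\top \delta A_t)\,\ud t$. The key device is to introduce the co-state $Y_t$ as the solution of the adjoint equation $\tfrac{\ud Y_t}{\ud t} = -A_t^\top Y_t$ with terminal condition $Y_\tf = Z - X_\tf$, which is exactly~\eqref{eq:Yt}; by construction this gives $\tfrac{\partial {\sf H}}{\partial x}(X_t,Y_t,A_t) = A_t^\top Y_t$. The purpose of this choice is that the quantity $Y_t^\top \xi_t$ telescopes: differentiating, $\tfrac{\ud}{\ud t}(Y_t^\top \xi_t) = (\tfrac{\ud Y_t}{\ud t})^\top \xi_t + Y_t^\top \tfrac{\ud \xi_t}{\ud t} = -Y_t^\top A_t \xi_t + Y_t^\top(A_t\xi_t + \delta A_t X_t) = Y_t^\top \delta A_t\, X_t$. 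Integrating from $0$ to $\tf$ and using $\xi_0=0$ converts the hard-to-control terminal variation $\E[(X_\tf-Z)^\top\xi_\tf] = -\E[Y_\tf^\top\xi_\tf]$ into the tractable running integral $-\int_0^\tf \E[Y_t^\top \delta A_t\, X_t]\,\ud t$.

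\smallskip

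Combining the two contributions, the first-order optimality condition becomes
\begin{equation*}
\frac{\ud}{\ud\epsilon}\Big|_{\epsilon=0}\mathsf{J}[A^\epsilon] = \int_0^\tf \Big(\lambda\,\tr(A_t^\top \delta A_t) - \E[Y_t^\top \delta A_t\, X_t]\Big)\,\ud t = 0.
\end{equation*}
Rewriting $\E[Y_t^\top \delta A_t X_t] = \tr\big(\delta A_t\, \E[X_t Y_t^\top]\big) = \tr\big((\E[Y_t X_t^\top])^\top \delta A_t\big)$, the integrand equals $\tr\big((\lambda A_t - \E[Y_t X_t^\top])^\top \delta A_t\big)$. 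Since $\delta A_t$ is an arbitrary matrix-valued variation at each $t$, the fundamental lemma of the calculus of variations forces $\lambda A_t = \E[Y_t X_t^\top]$, which is precisely~\eqref{eq:At}, and since $\tfrac{\partial {\sf H}}{\partial B}(x,y,B) = yx^\top - \lambda B$ this says $\E[\tfrac{\partial {\sf H}}{\partial B}(X_t,Y_t,A_t)] = 0$, i.e. $A_t$ is a stationary point of $\E[{\sf H}(X_t,Y_t,\cdot)]$; concavity of ${\sf H}$ in $B$ (the Hessian is $-\lambda I$) upgrades this to a maximizer. Equation~\eqref{eq:Xt} is just the state dynamics restated via $\tfrac{\partial {\sf H}}{\partial y} = A_t x$, so all three claims follow.

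\smallskip

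I expect the main obstacle to be the careful handling of the expectation and the interchange of differentiation (in $\epsilon$) with the expectation and the time integral. This requires justifying that $X_t^\epsilon$ depends smoothly on $\epsilon$ uniformly enough (using the finiteness of $\Sigma_0 = \E[X_0 X_0^\top]$ and Gr\"onwall-type bounds on the linear flow) so that dominated convergence applies; the purely algebraic telescoping of $Y_t^\top\xi_t$ and the variational lemma are then routine.
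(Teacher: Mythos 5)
Your proposal is correct, but it takes a genuinely different route from the paper's proof. The paper (Appendix~\ref{apdx:Hamiltonian}) uses the classical Pontryagin needle variation: the candidate optimal control is replaced by an arbitrary constant matrix $B$ on a shrinking interval $[s-\epsilon,s]$; the first-order change in cost, combined with the same conservation identity you exploit (the pairing of $Y_t$ with the state variation is constant in $t$), directly yields the inequality $\E[{\sf H}(X_s,Y_s,A_s)]\geq\E[{\sf H}(X_s,Y_s,B)]$ for every $B\in M_d(\Re)$, i.e.\ the maximization statement itself; the closed form $A_t=\frac{1}{\lambda}\E[Y_t X_t^\top]$ is then simply read off as the first-order condition of that concave maximization. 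You instead use a weak (Gateaux) variation $A_t+\epsilon\,\delta A_t$, obtain the stationarity condition $\lambda A_t=\E[Y_t X_t^\top]$ from the fundamental lemma, and only afterwards upgrade stationarity to maximization by citing the strict concavity of $B\mapsto\E[{\sf H}(X_t,Y_t,B)]$. Both arguments are sound here because the control is unconstrained and ${\sf H}$ is strictly concave in $B$ when $\lambda>0$. The trade-off: the needle variation proves the maximization property without any appeal to concavity (so it would survive control constraints or a non-concave Hamiltonian, where weak variations give only stationarity), whereas your computation is more economical in this setting --- it is in fact the same calculation the paper performs separately in Appendix~\ref{apdx:J-var} to derive the gradient formula~\eqref{eq:gradJ}, so your route establishes the proposition and the gradient expression in one pass. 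One caveat that applies to both proofs: the argmax in~\eqref{eq:At} and the formula $\frac{1}{\lambda}\E[Y_t X_t^\top]$ only make sense for $\lambda>0$; at $\lambda=0$ the Hamiltonian is linear in $B$ and no maximizer exists unless $\E[Y_t X_t^\top]=0$.
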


The Hamiltonian is also used to express the first order variation in
the functional $\sfJ$. For this purpose, define 
the Hilbert space of matrix-valued functions $L^2([0,T];M_d(\Re)):=\{A:[0,T]\to
M_d(\Re) \mid 
\int_0^T \tr(A_t^\top A_t)\ud t < \infty \}$,
under the inner product $\lr{A}{V}_{L^2} :=\int_0^T
\tr(A_t^\top V_t)\ud t$. 
For any $A\in
L^2$, the gradient of the functional $\sfJ$ evaluated at $A$ is
denoted as $\nabla {\sf J}[A] \in L^2$.  It is defined using
the directional derivative formula:
\[
\lr{\nabla {\sf J}[A]}{V}_{L^2} := \lim_{\epsilon\rightarrow 0} \frac{
  {\sf J}(A+\epsilon V) -  {\sf J}(A)}{\epsilon} 
\]
where $V\in L^2$ prescribes the direction (variation) along which the derivative is
being computed.  The explicit formula for $\nabla {\sf J}$ is given by
\begin{equation}
\nabla {\sf J}[A] := - \E\left[ \frac{\partial {\sf H}}{\partial B} (X_t,Y_t,A_t)\right] = \lambda A_t - \E\left[Y_t\,X_t^\top\right]\label{eq:gradJ}
\end{equation}
where $X_t$ and $Y_t$ are the obtained by solving the Hamilton's
equations~\eqref{eq:Xt}-\eqref{eq:Yt} with the prescribed (not
necessarily optimal) weight matrix $A \in L^2$ (the details of the derivation appears in the Appendix~\ref{apdx:J-var}).  
The significance of the
formula is that the steepest descent in the objective function ${\sf
  J}$ is obtained by moving in the direction of the steepest (for
each fixed $t\in[0,T]$) ascent in the Hamiltonian ${\sf H}$.
Consequently, a stochastic gradient descent algorithm to learn the weights is: 
\begin{equation}
A_t^{(k+1)} = A_t^{(k)} - \eta_k ({\lambda A_t^{(k)} - Y_t^{(k)}\,{X_t^{(k)}}^\top}
),
\label{eq:Atl}
\end{equation}
where $\eta_k$ is the step-size at iteration $k$ and $X_t^{(k)}$ and $Y_t^{(k)}$ are obtained by solving the Hamilton's equations~\eqref{eq:Xt}-\eqref{eq:Yt}:
\begin{align}
\text{(Forward propagation)}\quad \frac{\ud }{\ud t} X_t^{(k)}&= +A_t^{(k)} X_t^{(k)},\quad \text{with
  init. cond.}\;
\; X_0^{(k)}\label{eq:Xtl}\\
\text{(Backward propagation)}\quad \frac{\ud }{\ud t} Y_t^{(k)} &=-A_t^{(k)\top} Y_t^{(k)},\quad Y_\tf^{(k)} =
\underbrace{Z^{(k)} -X_\tf^{(k)}}_{\text{error}} 
\label{eq:Ytl}
\end{align}
based on the sample input-output $(X^{(k)},Z^{(k)})$. 
Note the forward-backward structure of the algorithm: In the forward pass, the network output $X_\tf^{(k)}$ is obtained given the input $X^{(k)}_0$; In the backward pass, the error between the network output $X^{(k)}_\tf$ and true output $Z^{(k)}$ is computed and propagated backwards.
By setting $\lambda=0$ the standard backprop algorithm is obtained.  A
convergence result for the learning algorithm for the $[\lambda=0]$ case  appears in the Appendix~\ref{apdx:convergence}.

In the remainder of this paper, the focus is on the analysis of the
critical points.

\section{Critical points}
\label{sec:critical_points}

For continuous-time networks, the critical points of the $[\lambda=0]$
problem are all global minimizers (An
analogous result for residual MNN appears in~\cite[Theorem
2.3]{DBLP:journals/corr/HardtM16}).

\begin{theorem}
Consider the $[\lambda=0]$ optimization
problem~\eqref{eq:opt_prob} with non-singular $\Sigma$.  
For this
problem (provided a minimizer exists) every critical point is a global minimizer.  That is,    
\begin{equation*}
\quad\nabla {\sf J}[A] = 0 \quad \Longleftrightarrow \quad  {\sf J}(A)= {\sf J}^* := \min_{A}\sfJ[A]
\end{equation*}  
Moreover, for any given (not necessarily optimal) $A\in L^2$,
\begin{equation}
\|\nabla  {\sf J}[A]\|_{L^2}^2 \geq \, \tf \, e^{-2\int_0^\tf
  \sqrt{\tr(A_t^\top A_t)} \; \ud t}\;
\lambda_{\text{min}}(\Sigma)( {\sf J}(A)- {\sf J}^*)  
\label{eq:gradJ-J}
\end{equation}
\label{prop:critical-pt}
\end{theorem}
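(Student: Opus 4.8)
The plan is to establish the quantitative Polyak--{\L}ojasiewicz-type bound~\eqref{eq:gradJ-J} directly, since both directions of the equivalence follow from it. Indeed, if $\nabla\sfJ[A]=0$ then the left side of~\eqref{eq:gradJ-J} vanishes while its coefficient is strictly positive ($T>0$, the exponential is positive, and non-singular $\Sigma$ gives $\lambda_{\text{min}}(\Sigma)>0$), forcing $\sfJ(A)\le\sfJ^*$; combined with $\sfJ(A)\ge\sfJ^*$ this gives $\sfJ(A)=\sfJ^*$. The converse is immediate, as a global minimizer of the differentiable functional $\sfJ$ is necessarily a critical point. Hence the crux is~\eqref{eq:gradJ-J}.

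First I would solve the Hamilton's equations~\eqref{eq:Xt}--\eqref{eq:Yt} in closed form for a fixed (not necessarily optimal) $A\in L^2$. Let $\Phi(t,s)$ be the state-transition matrix of $\frac{\ud}{\ud t}\Phi(t,s)=A_t\Phi(t,s)$, $\Phi(s,s)=I$. Then $X_t=\Phi(t,0)X_0$, and integrating~\eqref{eq:Yt} backward from $Y_T=Z-X_T$ gives $Y_t=\Phi(T,t)^\top(Z-X_T)$. Writing $Z=RX_0+\noise$, $X_T=\Phi(T,0)X_0$, setting $E:=\Phi(T,0)-R$, and using $\E[\noise X_0^\top]=0$ with $\Sigma:=\E[X_0X_0^\top]$, the gradient~\eqref{eq:gradJ} (at $\lambda=0$) becomes $\nabla\sfJ[A]_t=-\E[Y_tX_t^\top]=\Phi(T,t)^\top E\,\Sigma\,\Phi(t,0)^\top$. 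The same computation gives $\sfJ(A)-\tfrac{1}{2}\E[|\noise|^2]=\tfrac{1}{2}\tr(E\Sigma E^\top)$, and since $\sfJ^*\ge\tfrac{1}{2}\E[|\noise|^2]$ always holds, one obtains $\tr(E\Sigma E^\top)\ge 2(\sfJ(A)-\sfJ^*)$. (Under property~(P1), $R$ is exactly realizable so $\sfJ^*=\tfrac12\E[|\noise|^2]$ with equality, but only the inequality is needed below.)

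Next I would bound the integrand $\|\nabla\sfJ[A]_t\|^2=\tr(\nabla\sfJ[A]_t^\top\nabla\sfJ[A]_t)$ from below, uniformly in $t$, using two ingredients. (i) The Frobenius estimate $\|MXN\|\ge\sigma_{\min}(M)\sigma_{\min}(N)\|X\|$, applied with $M=\Phi(T,t)^\top$, $X=E\Sigma$, $N=\Phi(t,0)^\top$, yields $\|\nabla\sfJ[A]_t\|\ge\sigma_{\min}(\Phi(T,t))\,\sigma_{\min}(\Phi(t,0))\,\|E\Sigma\|$. (ii) The spectral bound $\|E\Sigma\|^2=\tr(E^\top E\Sigma^2)\ge\lambda_{\text{min}}(\Sigma)\tr(E^\top E\Sigma)=\lambda_{\text{min}}(\Sigma)\tr(E\Sigma E^\top)$, obtained by diagonalizing $\Sigma=\sum_i\mu_iv_iv_i^\top$ and using $\mu_i^2\ge\mu_i\lambda_{\text{min}}(\Sigma)$ together with $v_i^\top E^\top Ev_i\ge0$. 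The remaining and, I expect, principal obstacle is the lower bound on the smallest singular values of the propagators. For this I would apply Grönwall's inequality to $\frac{\ud}{\ud t}\|\Phi(t,0)v\|^2=2\langle\Phi(t,0)v,A_t\Phi(t,0)v\rangle\ge-2\sqrt{\tr(A_t^\top A_t)}\,\|\Phi(t,0)v\|^2$, via $\|A_t\|_2\le\sqrt{\tr(A_t^\top A_t)}$, to get $\sigma_{\min}(\Phi(t,0))\ge e^{-\int_0^t\sqrt{\tr(A_s^\top A_s)}\,\ud s}$ and likewise $\sigma_{\min}(\Phi(T,t))\ge e^{-\int_t^T\sqrt{\tr(A_s^\top A_s)}\,\ud s}$, whose product telescopes to $e^{-\int_0^T\sqrt{\tr(A_s^\top A_s)}\,\ud s}$.

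Finally I would assemble the pieces: squaring gives $\|\nabla\sfJ[A]_t\|^2\ge e^{-2\int_0^T\sqrt{\tr(A_s^\top A_s)}\,\ud s}\,\lambda_{\text{min}}(\Sigma)\,\tr(E\Sigma E^\top)$, a bound independent of $t$. Integrating over $[0,T]$ produces the prefactor $T$, and substituting $\tr(E\Sigma E^\top)\ge 2(\sfJ(A)-\sfJ^*)$ yields~\eqref{eq:gradJ-J} with room to spare in the constant. The main subtlety to treat carefully is the uniform-in-$t$ control of $\sigma_{\min}$ of the forward and backward propagators, as that is exactly what generates the exponential factor; everything else reduces to assembling standard matrix and Grönwall estimates.
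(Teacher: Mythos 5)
Your proposal is correct and follows essentially the same route as the paper's own proof: closed-form solution of the Hamilton's equations via the state-transition matrix, the gradient formula $\nabla\sfJ[A]_t=\phi({T,t})^\top(\phi({T,0})-R)\Sigma\,\phi({t,0})^\top$, extraction of $\lambda_{\text{min}}(\Sigma)$ by Frobenius/singular-value estimates, a Gr\"onwall bound on the smallest singular values of the forward and backward propagators (the paper's Lemma~\ref{lem:ineq}), and integration over $[0,T]$ to produce the factor $\tf$. The only cosmetic difference is that you deduce the equivalence from the PL-type bound plus Fermat's rule, while the paper gets it directly from invertibility of the transition matrix; your handling of $\sfJ^*\ge\tfrac12\E[|\noise|^2]$ is in fact slightly more careful than the paper's.
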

\begin{proof} (Sketch) For the linear system~\eqref{eq:lin_net}, the
  fundamental solution matrix is denoted as $\phi_{t;t_0}$.  The
  solutions of the Hamilton's equations~\eqref{eq:Xt}-\eqref{eq:Yt} are
  given by
\begin{align*}
X_t &= \phi_{t;0} X_0,\quad
Y_t = \phi_{T;t}^\top(Z-X_\tf)
\end{align*}
Using the formula~\eqref{eq:gradJ} upon taking an expectation
\begin{align*}
\nabla \sfJ[A] = -\phi_{T;t}^{\top}(R-\phi_{T;0})\Sigma \phi_{t;0}^\top
\end{align*}
which (because $\phi$ is invertible) proves that: 
\begin{equation*}
\quad\nabla {\sf J}[A] = 0 \quad \Longleftrightarrow \quad \phi_{T;0}=R\quad \Longleftrightarrow \quad  {\sf J}(A)= {\sf J}^* := \min_{A}\sfJ[A]
\end{equation*}  
The derivation of the bound~\eqref{eq:gradJ-J} is equally straightforward
and appears in the Appendix~\ref{apdx:critical-pt}. 
\end{proof} 
Although the result is attractive, the conclusion is somewhat
misleading because (as we will demonstrate with examples) even a small
amount of regularization can lead to local (but not global) minimum as
well as saddle point solutions.

\noindent {\bf Assumption:} 
 The following assumption is made throughout the remainder of this paper:
\begin{romannum}
\item {\bf Property P1:}  The matrix $R$ has no eigenvalues on
  $\mathbb{R}^{-}:=\{x\in\Re \;:\; x\le 0\}$.  The matrix $R$ is
  non-derogatory.  That is, no eigenvalue of $R$ appears in more than
  one Jordan block.  
\end{romannum}

For the scalar ($d=1$) case, this property means $R$ is
strictly positive.  For the scalar case, $\phi_{\tf,0}=e^{\int_0^\tf
  A_t \ud t}$ and the positivity of $R$ is seen to be necessary to obtain a
meaningful approximation.  

For the vector case, this property represents a sufficient condition
such that $\log(R)$ can be defined as a real-valued matrix.  That is,
under property (P1), there exists a (not necessarily unique\footnote{Under Property (P1),
$\log (R)$ is uniquely defined if and only if all the eigenvalues of
$R$ are positive.  When not unique there are countably many
matrix logarithms, all denoted as $\log(R)$. 
The principal logarithm
of $R$ is the unique such matrix whose eigenvalues lie in the strip
$\{z\in\mathbb{C}\;:\; -\pi< \text{Im}(z) < \pi\}$.}) matrix
$\log(R)\in M_d(\Re)$ whose matrix exponential $e^{\log(R)}=R$;
cf.,~\cite{culver1966existence,higham2014functions}.  The logarithm is
trivially a minimum for the
$[\lambda=0]$ problem.  Indeed, $A_t \equiv \frac{1}{\tf} \log(R)$
gives $X_t = e^{\frac{\log(R)}{\tf} t} X_0$ and thus $X_T = e^{\log(R)} X_0 = R
X_0$.  This shows $A_t$ can be made arbitrarily small by
choosing a large enough depth $\tf$ of the network.  An analogous
result for the linear residual MNN appears in~\cite[Theorem
2.1]{DBLP:journals/corr/HardtM16}.  The question then is whether the
constant solution $A_t \equiv \frac{1}{\tf} \log(R)$ is also obtained
as a critical point for the $[\lambda=0^+]$ problem?


The following proposition provides a characterization of the
critical points (for the general $\lambda\in \Re^+$ problem) in terms of
the solutions of a matrix-valued characteristic equation: 


\begin{proposition}\label{prop:opt-vec}
The general solution of the Hamilton's
equations~\eqref{eq:Xt}-\eqref{eq:At} is given by
\begin{align}
X_t &= e^{2t \Omega}\; e^{t \A0^\top} X_0\label{eq:XtC}\\
Y_t &= e^{2t \Omega} \; e^{(\tf-t) \A0} \; e^{-2\tf \Omega}\, (Z-X_\tf)
\label{eq:YtC}\\
A_t &= e^{2t\Omega}\A0 e^{-2t \Omega}  
\label{eq:opt-At-vec}
\end{align}
where $\A0\in M_d(\Re)$ is an arbitrary solution of the characteristic equation
\begin{equation}
\lambda \A0 = F^\top (R-F)\Sigma
\label{eq:char-vec-A}
\end{equation}
where $F:=e^{2\tf\Omega} \, e^{\tf \A0^\top}$ and the matrix $\Omega:=\frac{1}{2}(\A0-\A0^\top)$ is
the skew-symmetric component of $\A0$.  
The associated cost is
given by 
\[
{\sf J}[A] = \frac{\lambda\tf}{2}\tr\left(\A0^\top\A0 \right)+\frac{1}{2}\tr\left((F-R)^\top(F-R)\Sigma\right) + \frac{1}{2}\expect{|\noise|^2}
\] 
And the following holds:
\begin{equation*}
A_t \equiv \A0 ~~\underset{\text{(i)}}{\Longleftrightarrow}~~ \A0 ~~\text{is normal} ~~\overset{(\Sigma=I)}{\underset{\text{(ii)}}{\Longrightarrow}}~~R~~\text{is normal}
\end{equation*}

%
\end{proposition}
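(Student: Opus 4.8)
The plan is to treat the three relations \eqref{eq:Xt}, \eqref{eq:Yt}, \eqref{eq:At} as one coupled system and to \emph{derive} the stated closed form (rather than merely verify it), so that the solution is genuinely the general one. The starting observation is that \eqref{eq:At} makes $A_t=\frac1\lambda\E[Y_tX_t^\top]$ quadratic in the trajectories, so that differentiating $P_t:=\E[Y_tX_t^\top]=\lambda A_t$ and substituting \eqref{eq:Xt}--\eqref{eq:Yt} yields an autonomous ODE for $A_t$ alone: $\dot P_t=-A_t^\top P_t+P_tA_t^\top$, i.e. $\dot A_t=A_tA_t^\top-A_t^\top A_t$. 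The crucial structural fact, which I expect to be the main obstacle to spot, is that this is an \emph{isospectral} (double-bracket) flow whose right-hand side is symmetric; hence the skew part $A_t-A_t^\top$ is conserved. Writing $\A0:=A_0$ and $\Omega:=\frac12(\A0-\A0^\top)$, one gets $A_t-A_t^\top\equiv 2\Omega$, and the flow collapses to the \emph{linear} equation $\dot A_t=[\,2\Omega,A_t\,]$, whose solution is $A_t=e^{2t\Omega}\A0 e^{-2t\Omega}$, establishing \eqref{eq:opt-At-vec}.

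With $A_t$ known, \eqref{eq:Xt} and \eqref{eq:Yt} are linear time-varying ODEs, which I would solve by the substitutions $X_t=e^{2t\Omega}\eta_t$ and $Y_t=e^{2t\Omega}\zeta_t$; since $\Omega$ commutes with $e^{2t\Omega}$ and $2\Omega=\A0-\A0^\top$, the conjugated equations reduce to the constant-coefficient forms $\dot\eta_t=\A0^\top\eta_t$ and $\dot\zeta_t=-\A0\zeta_t$, which integrate to \eqref{eq:XtC} and \eqref{eq:YtC} after imposing $X_0$ and $Y_\tf=Z-X_\tf$. To close the system I evaluate \eqref{eq:At} at $t=0$: since $X_\tf=FX_0$ with $F=e^{2\tf\Omega}e^{\tf\A0^\top}$, and $\noise$ is zero-mean and independent of $X_0$, one has $\E[(Z-X_\tf)X_0^\top]=(R-F)\Sigma$, so $\lambda\A0=\E[Y_0X_0^\top]=F^\top(R-F)\Sigma$, which is the characteristic equation \eqref{eq:char-vec-A}. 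A short check confirms the $t=0$ condition is enough: reinserting it, the stationarity requirement at general $t$ reduces to $e^{-t\A0}\A0 e^{t\A0}=\A0$, which holds because every matrix commutes with its own exponential.

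For the cost, conjugation-invariance of the trace gives $\tr(A_t^\top A_t)=\tr(\A0^\top\A0)$ for every $t$, so the regularization term integrates to $\frac{\lambda\tf}{2}\tr(\A0^\top\A0)$; expanding $X_\tf-Z=(F-R)X_0-\noise$ and again using independence and zero mean yields $\E[|X_\tf-Z|^2]=\tr((F-R)^\top(F-R)\Sigma)+\E[|\noise|^2]$, which assembles into the stated formula.

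Finally, the equivalence chain. For (i), $A_t\equiv\A0$ means $e^{2t\Omega}\A0 e^{-2t\Omega}=\A0$ for all $t$; differentiating at $t=0$ shows this is equivalent to $[\Omega,\A0]=0$, and since $2\Omega=\A0-\A0^\top$ this is exactly $[\A0^\top,\A0]=0$, i.e. $\A0$ is normal. For (ii), with $\Sigma=I$ and $\A0$ normal, $\A0$ and $\A0^\top$ commute, so $e^{2\tf\Omega}=e^{\tf\A0}e^{-\tf\A0^\top}$ and hence $F=e^{\tf\A0}$; the characteristic equation then reads $R=e^{\tf\A0}+\lambda e^{-\tf\A0^\top}\A0$, a convergent series in the commuting pair $\A0,\A0^\top$. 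Consequently both $R$ and $R^\top$ lie in the commutative algebra they generate, so $RR^\top=R^\top R$ and $R$ is normal.
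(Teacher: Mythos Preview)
Your proof is correct and follows essentially the same route as the paper: derive the autonomous ODE $\dot A_t=A_tA_t^\top-A_t^\top A_t$ from \eqref{eq:Xt}--\eqref{eq:At}, exploit conservation of the skew part $\Omega$ to linearize and solve for $A_t$, then integrate $X_t,Y_t$ and impose \eqref{eq:At} to obtain the characteristic equation, the cost formula, and the normality implications. Your framing (observing directly that the right-hand side is symmetric, using the ansatz $X_t=e^{2t\Omega}\eta_t$ and $Y_t=e^{2t\Omega}\zeta_t$, and explicitly verifying that the $t=0$ condition suffices via $e^{-t\A0}\A0 e^{t\A0}=\A0$) is slightly more streamlined than the appendix's decomposition $A_t=S_t+\Omega_t$, but the underlying ideas are identical.
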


\begin{proof}(Sketch)
Differentiating both sides of~\eqref{eq:At} with respect to $t$ and using the Hamilton's
equations~\eqref{eq:Xt}-\eqref{eq:Yt}, one obtains
\begin{equation}
\frac{\ud A_t}{\ud t} = -A_t^\top A_t + A_tA_t^\top \label{eq:Atdt}
\end{equation}  
whose general solution is given by~\eqref{eq:opt-At-vec}.  It is
easily verified that, with $A_t$ given
by~\eqref{eq:opt-At-vec}, formulae~\eqref{eq:XtC}-\eqref{eq:YtC} are solutions
of equations~\eqref{eq:Xt}-\eqref{eq:Yt}. The characteristic equation is obtained by using the formula~\eqref{eq:At}. The claim (i) easily follows from \eqref{eq:Atdt}. 
To prove (ii): if $\A0$ is normal, then $\A0$ and $\Omega$ commute, therefore $F =e^{\tf \A0}$. Hence the characteristic equation simplifies to
$
\lambda \A0 = e^{\A0^\top}(R-e^{\A0})\Sigma
$ or equivalently $\lambda \A0e^{-\A0^\top}\Sigma^{-1} +e^{\A0} = R$. Therefore if $\A0$ and $\Sigma$ commute (always true when $\Sigma=I$), $R$ is a normal matrix (the detailed proof appears in the Appendix~\ref{apdx:opt-vec}).
\end{proof}   

\begin{remark}
The result shows that the answer to the question posed above concerning the
constant solution $A_t \equiv \frac{1}{\tf} \log(R)$ is false in
general for the $[\lambda=0^+]$ problem: For $\lambda>0$ and 
$\Sigma_0=I$, a constant solution is a critical point only if $R$ is a
normal matrix.  For the generic case of non-normal $R$, any critical point is necessarily
non-constant for any positive choice of the parameter $\lambda$.  Some
of these non-constant critical points are described as part of the
Example~\ref{ex:non-normal}.
\end{remark}

The above proposition is useful because it helps reduce the
infinite-dimensional problem to a finite-dimensional characteristic
equation~\eqref{eq:char-vec-A}.  The solutions $\A0$ of the
characteristic equation fully parametrize the solutions of the
Hamilton's equations~\eqref{eq:Xt}-\eqref{eq:At} which in turn represent the
critical points of the optimal control problem~\eqref{eq:opt_prob}.   

The matrix-valued nonlinear characteristic
equation~\eqref{eq:char-vec-A} is still formidable. To gain analytical and numerical insight into the matrix case, the following
strategy is employed:
\begin{romannum}
\item A solution $\A0$ is obtained by setting $\lambda=0$ in the
  characteristic equation.  The corresponding equation is
\[
e^{\tf(\A0-\A0^\top)} e^{\tf \A0^\top} = R
\]
This solution is denoted as $\A0(0)$.  
\item Implicit function theorem is used to establish
  (local) existence of a solution branch $\A0(\lambda)$ in a
  neighborhood of $\lambda=0$ solution.  
\item Numerical continuation is used to compute the solution $\A0(\lambda)$ 
  as a function of the parameter $\lambda$. 
\end{romannum}

The following theorem provides a characterization of normal solutions
$\A0$ for the case where $R$ is assumed to be a normal matrix and
$\Sigma=I$. 

\begin{theorem}\label{cor:opt-vec}
Consider the characteristic equation~\eqref{eq:char-vec-A} where $R$
is assumed to be a normal matrix that satisfies the Property (P1) and
$\Sigma=I$.   
\begin{romannum}
\item For $\lambda=0$ the normal solution of~\eqref{eq:char-vec-A} is
  given by $ \frac{1}{T}
  \log (R)$. 
\item For each such solution, there exists a neighborhood ${\cal N}\subset
  \Re^+$ of $\lambda=0$
  such that the solution of the characteristic
  equation~\eqref{eq:char-vec-A}  is well-defined as a continuous map
  from $\lambda\in {\cal N} \rightarrow \A0(\lambda) \in M_d(\Re)$ with $\A0(0)=\frac{1}{T} \log (R)$. This
  solution is given by the asymptotic formula
\[
\A0(\lambda) = \frac{1}{T} \log (R) - \frac{\lambda}{T^2} (RR^\top)^{-1}\log(R) + O(\lambda^2) 
\]
\end{romannum} 
\end{theorem}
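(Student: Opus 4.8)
The plan is to apply the implicit function theorem to the characteristic equation~\eqref{eq:char-vec-A}, but carried out \emph{inside} the commutative matrix algebra $\mathcal{R}$ generated by $\{I,R,R^\top\}$. Since $R$ is normal, $RR^\top=R^\top R$, so $\mathcal{R}$ is commutative and closed under transpose, every element of $\mathcal{R}$ is normal, and all of them are simultaneously diagonalized with $R$. Working in $\mathcal{R}$ buys two simplifications. First, any $\A0\in\mathcal{R}$ is normal, so by Proposition~\ref{prop:opt-vec}(i) the skew part $\Omega$ commutes with $\A0$ and $F=e^{\tf\A0}$; thus a solution obtained inside $\mathcal{R}$ automatically solves the full (unrestricted) equation. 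Second, for $\A0,H\in\mathcal{R}$ the Fréchet derivative of the exponential collapses to $De^{\tf\A0}[H]=\tf e^{\tf\A0}H$, avoiding the $\mathrm{ad}$-operator expansion. With $\Sigma=I$ the equation restricted to $\mathcal{R}$ reads $\lambda\A0=e^{\tf\A0^\top}(R-e^{\tf\A0})$, so I would define $G(\A0,\lambda):=\lambda\A0-e^{\tf\A0^\top}(R-e^{\tf\A0})$ as a map $G:\mathcal{R}\times\Re\to\mathcal{R}$, which is well defined because $\mathcal{R}$ is a transpose-closed algebra stable under exponentiation.

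For part~(i) I would set $\lambda=0$. Since $e^{\tf\A0^\top}$ is invertible, $G(\A0,0)=0$ forces $e^{\tf\A0}=R$, i.e. $\A0=\tfrac{1}{\tf}\log R$. Property (P1) guarantees a real logarithm, and because $R$ is normal and non-derogatory (hence with simple spectrum) $\log R$ is a polynomial in $R$, so each branch of $\log R$ lands in $\mathcal{R}$ and is normal. This fixes the base point $\A0_0:=\tfrac{1}{\tf}\log R$, with $e^{\tf\A0_0}=R$ and $e^{\tf\A0_0^\top}=R^\top$.

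For part~(ii) the heart of the argument is the linearization $D_{\A0}G(\A0_0,0)$. Differentiating $G$ in a direction $H\in\mathcal{R}$ and using the commutative exponential-derivative formula produces three terms: $-\tf R^\top H^\top R$ from $-e^{\tf\A0^\top}R$, and $\tf R^\top R\,H^\top+\tf R^\top R\,H$ from $e^{\tf\A0^\top}e^{\tf\A0}$ (using $e^{\tf\A0_0^\top}e^{\tf\A0_0}=R^\top R$). Because $H^\top\in\mathcal{R}$ commutes with $R$, one has $-\tf R^\top H^\top R=-\tf R^\top R\,H^\top$, which cancels the $H^\top$ term and leaves $D_{\A0}G(\A0_0,0)[H]=\tf R^\top R\,H=\tf RR^\top H$. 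Since $R$ is nonsingular by (P1), $RR^\top$ is positive definite and this map is invertible on $\mathcal{R}$, so the implicit function theorem delivers a $C^\infty$ branch $\lambda\mapsto\A0(\lambda)\in\mathcal{R}$ on a neighborhood $\mathcal{N}$ of $\lambda=0$ with $\A0(0)=\A0_0$. Finally, since $D_\lambda G(\A0_0,0)=\A0_0$, implicit differentiation gives $\A0'(0)=-(\tf RR^\top)^{-1}\A0_0=-\tfrac{1}{\tf^2}(RR^\top)^{-1}\log R$, and the expansion $\A0(\lambda)=\A0_0+\lambda\A0'(0)+O(\lambda^2)$ reproduces the stated asymptotic formula.

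The hard part will be justifying the restriction to $\mathcal{R}$: I must verify that the base solution lies in $\mathcal{R}$, that $G$ maps $\mathcal{R}\times\Re$ into $\mathcal{R}$, and—most importantly—that a zero of $G$ found inside $\mathcal{R}$ is a genuine solution of the full matrix equation~\eqref{eq:char-vec-A} rather than an artifact of the restriction; this last point is exactly where normality of elements of $\mathcal{R}$ (so that $F=e^{\tf\A0}$ and the restricted equation coincides with the full one) is used. A secondary technical point is justifying both the collapse $De^{\tf\A0}[H]=\tf e^{\tf\A0}H$ and the cancellation of the $H^\top$ term, which rely entirely on the commutativity afforded by $\mathcal{R}$; outside this algebra the exponential derivative carries $\mathrm{ad}$-operator corrections and the linearization would be considerably harder to invert.
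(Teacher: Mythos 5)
Your proof is correct, and it reaches the conclusion by a genuinely different route than the paper. The paper's proof of part (ii) diagonalizes: since $\A0$ is normal it writes $U^*RU=\text{diag}(r_1,\ldots,r_d)$ for a unitary $U$, seeks $\A0=U\,\text{diag}(\mu_1,\ldots,\mu_d)\,U^*$, and decouples the characteristic equation into $d$ scalar complex equations $\lambda\mu_n e^{-\mu_n^*}+e^{\mu_n}=r_n$; each is split into two real equations, the $2\times 2$ Jacobian at $\lambda=0$ is computed explicitly (determinant $e^{2a}\neq 0$), and the scalar implicit function theorem plus a regular perturbation expansion give $\mu_n=\log r_n-\lambda\log(r_n)/|r_n|^2+O(\lambda^2)$, which is reassembled into the matrix formula. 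You instead run the implicit function theorem coordinate-free in the commutative, transpose-closed algebra $\mathcal{R}$ generated by $\{I,R,R^\top\}$; your two key mechanisms --- the collapse $De^{\tf\A0}[H]=\tf e^{\tf\A0}H$ for commuting arguments, and the cancellation of the $H^\top$ terms leaving the invertible linearization $H\mapsto \tf RR^\top H$ --- are exactly the coordinate-free counterparts of the paper's nonsingular Jacobian, and implicit differentiation then yields $\A0'(0)=-\frac{1}{\tf^2}(RR^\top)^{-1}\log R$ in one step rather than entrywise. Each approach buys something: the paper's scalar reduction makes the multivaluedness of $\log$ explicit (the countably many base branches per eigenvalue pair that reappear in Example~1), whereas your construction never leaves $M_d(\Re)$ --- the paper's $UGU^*$ solution is a priori complex and tacitly requires conjugate-symmetric branch choices to be real, which you get for free since $\mathcal{R}\subset M_d(\Re)$. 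Two small points you should make explicit when writing this up: (a) for part (i) as stated, you need that \emph{every} normal solution at $\lambda=0$ satisfies $e^{\tf\A0}=R$, not only those in $\mathcal{R}$; this follows from Proposition~\ref{prop:opt-vec} ($F=e^{\tf\A0}$ for normal $\A0$) without any reference to $\mathcal{R}$, and is no loss of generality for part (ii) either, since a normal $\A0$ with $e^{\tf\A0}=R$ commutes with $R$, and $R$ has simple spectrum (normal and non-derogatory), so $\A0$ is a polynomial in $R$ and lies in $\mathcal{R}$ anyway; (b) invertibility of $H\mapsto \tf RR^\top H$ \emph{as a map of $\mathcal{R}$ to itself} uses $(RR^\top)^{-1}\in\mathcal{R}$, which holds by Cayley--Hamilton since $RR^\top\in\mathcal{R}$ is invertible.
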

\begin{proof}(sketch)
(i) If $\A0$ is normal, $e^{\tf(\A0-\A0^\top)}e^{\tf \A0}=e^{\tf
  \A0}$. Hence, for $\lambda=0$, the characteristic equation becomes
$e^{\tf \A0}=R$ whose solution is $\A0=\frac{1}{\tf}\log(R)$.  
(ii) For $\lambda>0$ and $\Sigma=I$ the characteristic equation is
expressed as
$\lambda \A0e^{-\A0^\top} +e^{\A0} = R$.
If $\A0$ is a normal solution, the equation can be
diagonalized with a unitary transformation, which can be used to compute the solution.  
The details appear in the Appendix~\ref{apdx:cor-opt-vec}.
\end{proof}

\begin{remark}
For the scalar case $\log(\cdot)$ is single-valued function. Therefore, $A_t\equiv\A0=\frac{1}{T}\log(R)$ is the unique critical point (minimizer) for the $[\lambda=0^+]$ problem. While the $[\lambda=0^+]$ problem admits a unique minimizer, the
$[\lambda=0]$ problem does not.  In fact, any
$A_t$ of the form
$A_t =  \frac{1}{T}\log(R) + \tilde{A}_t$
where $\int_0^{\tf} \tilde{A}_t \ud t = 0$ is also a minimizer of the
$[\lambda=0]$ problem.  So, while there are infinitely many minimizers
of the $[\lambda=0]$ problem, only one of these survives with even a
small amount of regularization.  
A global characterization of critical points as a function of parameters
$(\lambda,R,\Sigma_0,T)\in \Re^+\times \Re^+ \times \Re^+\times \Re^+$
is possible and appears in the Appendix~\ref{apdx:scalar-case}.  \label{rem:scalar}
\end{remark}
In the following, numerical solutions for two example problems are
described.  The second example involves a non-normal matrix $R$, and
as such is not covered by Theorem~\ref{cor:opt-vec}.  

\begin{example}[Normal]\label{ex:normal}
Consider the characteristic equation \eqref{eq:char-vec-A} with $R
= \begin{bmatrix} 0 & -1 \\ 1 & 0 \end{bmatrix}$ (rotation in the
plane by $\pi/2$), $\Sigma = I$ and $T=1$. 
For $\lambda = 0$, the normal solutions of the characteristic equation
are given by multivalued matrix logarithm function:$$\log(R) =
(\pi/2+2n\pi)\begin{bmatrix} 0 & -1 \\ 1 & 0 \end{bmatrix}
=:\A0(0;n),\quad n=0,\pm1,\pm 2,\hdots$$ It is easy to verify that
$e^{\A0(0;n)} = R$. 
$\A0(0;0)$ is referred to as the principal logarithm.
	\begin{figure}[t]
		\centering
		\includegraphics[width=0.49\columnwidth]{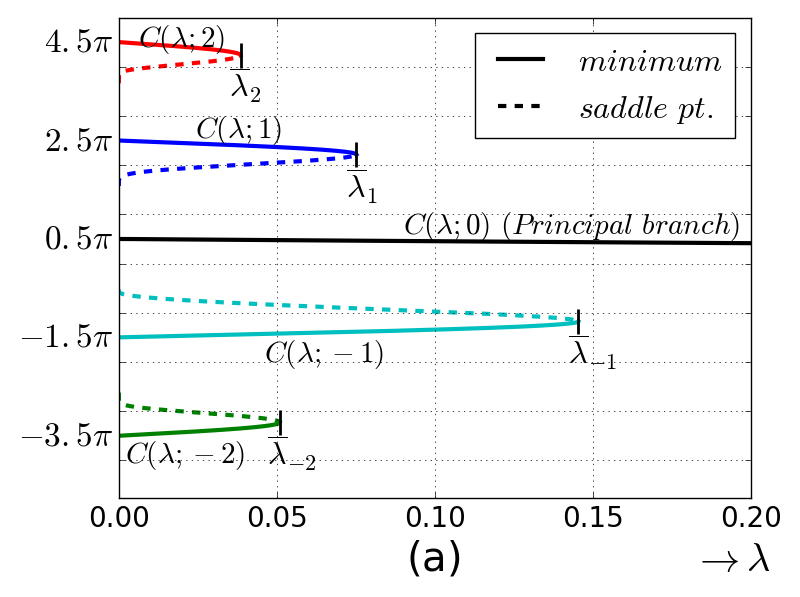}
		\includegraphics[width=0.49\columnwidth]{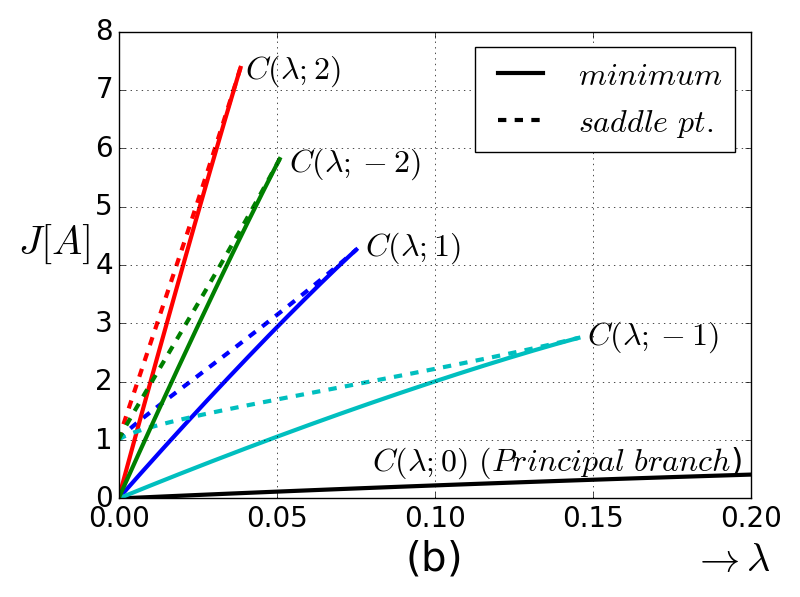}
\vspace{-0.1in}
		\caption{(a) Critical points in Example~1 (the $(2,1)$
                  entry of the solution matrix $C(\lambda;n)$ is
                  depicted for $n=0,\pm1,\pm2$); (b) The cost $J[A]$
                  for these solutions.}\label{fig:ex1}
\vspace{-0.1in}
	\end{figure}

	The software package PyDSTool~\cite{pydstool07} is used to numerically continue the solution $C(\lambda;n)$ as a function of the parameter $\lambda$. Figure \ref{fig:ex1}(a) depicts the solutions branches in terms of the $(2,1)$ entry of the matrix $C(\lambda;n)$ for $n=0,\pm1,\pm 2$. The following observations are made concerning these solutions:
	\begin{romannum}
		\item For each fixed $n\neq0$, there exist a range
                  $(0,\bar{\lambda}_n)$ for which there exist two
                  solutions, a local minimum and a saddle point. At
                  the limit (turning) point $\lambda =
                  \bar{\lambda}_n$, there is a qualitative change in
                  the solution from a minimum to a saddle point.
		\item As a function of $n$, $\bar{\lambda}_n$ decreases monotonically as $|n|$ increases. 
		For $\lambda > \bar{\lambda}_{-1}$, only a single solution, the principal branch $C(\lambda;0)$ was found using numerical continuation.
		\item Along the branch with a fixed $n\neq0$, as
                  $\lambda \downarrow 0$, the saddle point solution
                  escapes to infinity. That is as $\lambda \downarrow
                  0$, the saddle point solution
                  $C(\lambda;n)\to(\pi/2+(2n-1)\pi)\begin{bmatrix}-\infty
                    & -1\\1 & -\infty\end{bmatrix}$. The associated
                  cost $J[A]\downarrow 1$ (The cost of global minimizer $J^*=0$).
		\item Among the numerically obtained solution
                  branches, the principal branch $C(\lambda;0)$ has
                  the lowest cost. Figure~\ref{fig:ex1} (b) depicts
                  the cost for the solutions depicted in
                  Figure~\ref{fig:ex1} (a).	
	\end{romannum}

The numerical calculations indicate that while the $[\lambda=0]$ problem has infinitely
many critical points (all global minimizers), only a few of these
critical points persist for any finite positive value of $\lambda$.
Moreover, there exists both local (but not global) minimum as well as
saddle points for this case.  Among the solutions computed, the
principal branch (continued from the principal logarithm $C(0;0)$) has
the minimum cost. 
	
	
\end{example}

\begin{example}[Non-normal]\label{ex:non-normal}
Numerical continuation is used to obtain solutions for non-normal $R = \begin{bmatrix} 0 & -1 \\ 1 & \mu \end{bmatrix}$, where $\mu$ is a continuation parameter and $T=1$.
Figure~\ref{fig:ex2}(a) depicts a solution branch as a function of parameter $\mu$. 
The solution is initialized with the normal solution $\A0(0;0)$ described in Example~1. 
By varying $\mu$, the solution is continued to $\mu = \pi/2$
(indicated as ${\color{red}\diamond}$ in part~(a)). This way, the solution $\A0=\begin{bmatrix} 0 & 0 \\ \frac{\pi}{2} & 0 \end{bmatrix}$ is found for $R = \begin{bmatrix} 0 & -1 \\ 1 & \frac{\pi}{2} \end{bmatrix}$. 
It is easy to verify that $\A0$ is a solution of the characteristic
equation \eqref{eq:char-vec-A} for $\lambda = 0$ and $T=1$. For this
solution, the critical point of the optimal control problem
\begin{equation*}
A_t = \begin{bmatrix} -\pi \sin(\pi t) & \pi\cos(\pi t) - \pi\\
\pi\cos(\pi t) + \pi & \pi \sin(\pi t)
\end{bmatrix}
\end{equation*}
is non-constant. 
It is noted that the principal logarithm $\log(R) = \begin{bmatrix} -\gamma\tan \gamma &    -\gamma\sec \gamma\\ \gamma\sec \gamma&  \gamma\tan \gamma \end{bmatrix}$, where $\gamma = \sin^{-1}\left(\dfrac{\pi}{4}\right)$. The regularization cost for the non-constant solution $A_t$ is strictly smaller than the constant $\frac{1}{T}\log(R)$ solution:
	\begin{align*}
	\int_0^1\tr(A_tA_t^\top) \ud t = \int_0^1 \tr(\A0\A0^\top)\ud t &= \frac{\pi^2}{4} < 3.76 = 
	\int_0^1 \tr(\log(R)\log(R)^\top) \ud t
	\end{align*}
	

Next, the parameter $\mu=\frac{\pi}{2}$ is fixed, and the solution
continued in the parameter $\lambda$.  Figure~\ref{fig:ex2}(b) depicts
the cost $J[A]$ for the resulting solution branch of critical points
(minimum).  The cost with the constant $\frac{1}{T}\log(R)$ is also
depicted. It is noted that the
    latter is not a critical point of the optimal control problem for
    any positive value of $\lambda$.  

	\begin{figure}[t]
		\centering
		\includegraphics[width=0.49\columnwidth]{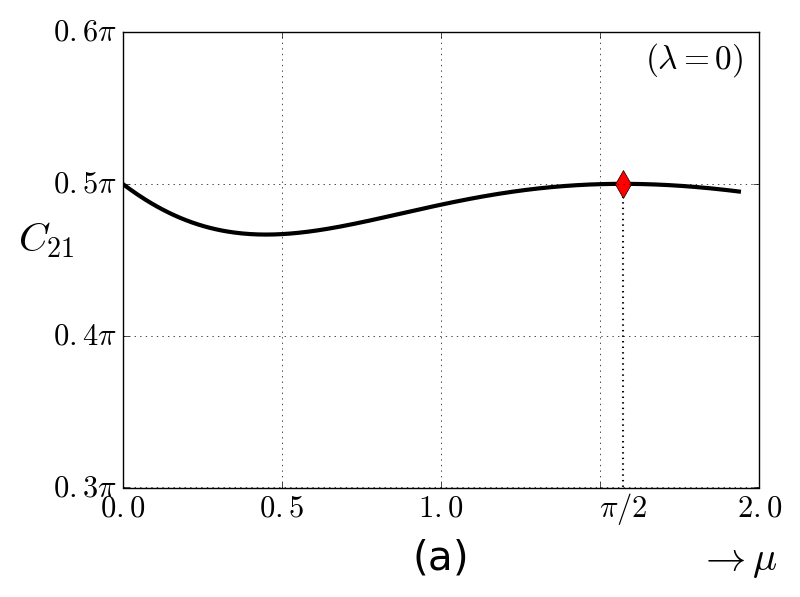}
		\includegraphics[width=0.49\columnwidth]{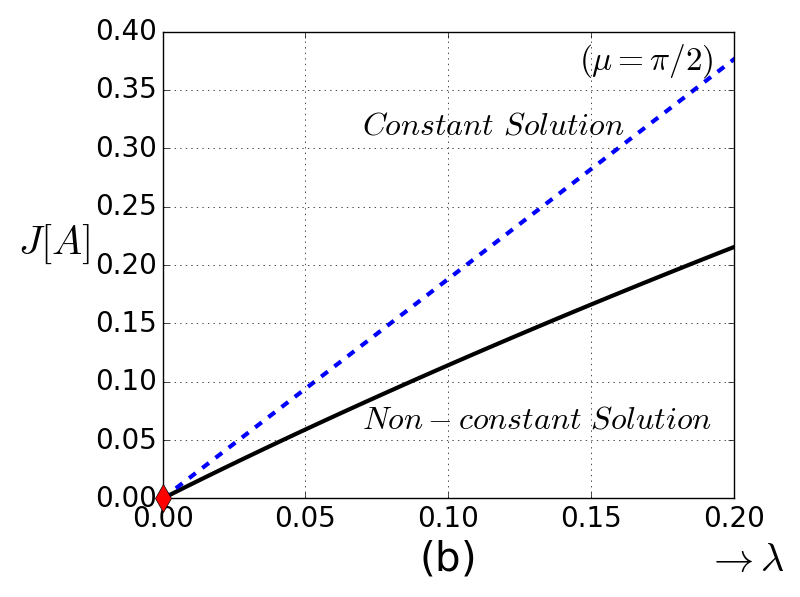}
\vspace{-0.1in}
		\caption{(a) Numerical continuation of the solution in
                  Example~2; (b) The cost $J[A]$ for the critical
                  point (minimum) and the constant
                  $\frac{1}{T}\log(R)$ solution.} \label{fig:ex2} 
\vspace{-0.1in}
	\end{figure}
	
\end{example}


\section{Conclusion}
In this paper, the non convex optimization problem of learning the weights of a linear network with a regularized model of mean-squared loss function
was introduced and studied. 
The regularized model~\eqref{eq:opt_prob} is likely to
reveal features (both good and bad) which are {\em robust} and as such
likely to be seen in an implementation of the backprop algorithm.
For example, it was shown that the regularization serves to constrain the number and type of critical points (see Remark~\ref{rem:scalar}). Also, saddle points can appear 
when none exist for the $[\lambda=0]$ problem (see Example~\ref{ex:normal}).
 The focus of the continuing research concerns the generalization property and the stability of the critical points. 

\bibliographystyle{plainnat}
\bibliography{neural}

\section{Appendix}\label{sec:appdx}
\newP{Notation} For all $B \in M_d(\Re)$, the Frobenius norm is denoted as $\|B\|$ given by $\|B\|:=\sqrt{\tr(BB^\top)}$.
\subsection{Scalar case} \label{apdx:scalar-case}

The scalar case is proved using elementary means and is useful to
both introduce the characteristic equation as well as highlight the difference
between the $[\lambda=0]$ and the $[\lambda=0^+]$ problems.

\begin{theorem}\label{prop:opt_scalar}
Consider the terminal cost optimal control problem~\eqref{eq:opt_prob}
for the scalar ($d=1$) case with $R > 0$ and $\Sigma_0 = \E [X_0^2] >
0$ given.  If $A_t$ is a minimizer then
\begin{equation}
A_t \equiv \A0, \quad
X_t = e^{t \A0} X_0\label{eq:crit_scalar}
\end{equation}
where the constant $\A0$ is a solution of the characteristic equation
\begin{equation}
\lambda \A0=  e^{ \tf \A0}(R-e^{ \tf \A0})  \Sigma_0
\label{eq:char-A}
\end{equation} 
Conversely a solution $\A0$ of the characteristic
equation~\eqref{eq:char-A} defines a critical point~\eqref{eq:crit_scalar} of the optimal control
problem~\eqref{eq:opt_prob}.  

The following is a complete characterization of the solutions $\A0$ of the
characteristic equation~\eqref{eq:char-A} as a function of parameters
$(\lambda,R,\Sigma_0,T)\in \Re^+\times \Re^+ \times \Re^+\times \Re^+$:
\begin{romannum}
\item For $\lambda\in[0,2e^3\Sigma_0 T]$ there exists a 
  unique solution.  The associated solution obtained
  using~\eqref{eq:crit_scalar} is a minimizer.    
\item In the asymptotic limit as $\lambda \downarrow 0$, the minimizer is
given by an asymptotic expansion
\begin{equation}
\A0 = \frac{1}{\tf} \log(R) -  \lambda \frac{\log(R)}{\tf^2 R^2 \Sigma_0} 
+ O(\lambda^2)\label{eq:C-asymptotic}
\end{equation}
The unique solution for the $\lambda=0^+$ problem, obtained by
  retaining the first order term, is given by $\A0 = \frac{1}{\tf}
  \log(R)$.
\item For $\lambda>2e^3\Sigma_0 T$, there exists an interval such that for $R \in
 [R_1(\lambda),R_2(\lambda)]$ there are exactly 3
 solutions of the characteristic equation.  For $R>R_2(\lambda)$ or
 $R<R_1(\lambda)$ there exists exactly one solution. 
\end{romannum}
\end{theorem}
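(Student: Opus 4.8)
The plan is to treat the scalar problem directly, first reducing the infinite-dimensional optimization to a one-dimensional stationarity condition, and then performing an elementary but careful count of the roots of the resulting scalar equation.

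First I would solve the scalar dynamics explicitly: the control ODE gives $X_t = \exp(\int_0^t A_s\ud s)\,X_0$, so the terminal state depends on the control only through the scalar $a := \int_0^\tf A_t\ud t$, with $X_\tf = e^a X_0$. Substituting $Z = R X_0 + \noise$ and using independence and zero mean of $\noise$, the loss becomes
\[
\sfJ[A] = \frac{\lambda}{2}\int_0^\tf A_t^2\ud t + \frac{1}{2}\Sigma_0 (e^a - R)^2 + \frac{1}{2}\E[|\noise|^2].
\]
The terminal term depends only on $a$, while by Cauchy--Schwarz $\int_0^\tf A_t^2\ud t \ge a^2/\tf$ with equality iff $A_t$ is constant. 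Hence for $\lambda>0$ and every fixed $a$ the cost is strictly reduced by replacing $A$ with the constant $A_t \equiv a/\tf$, so any minimizer is constant, $A_t\equiv\A0$, giving \eqref{eq:crit_scalar}. Minimizing the reduced cost $g(\A0) = \frac{\lambda\tf}{2}\A0^2 + \frac12\Sigma_0(e^{\tf\A0}-R)^2 + \text{const}$ and setting $g'(\A0)=0$ yields exactly the characteristic equation \eqref{eq:char-A}. For the converse I would insert the constant control into the gradient formula \eqref{eq:gradJ}: with $X_t = e^{t\A0}X_0$ and $Y_t = e^{(\tf-t)\A0}(Z-X_\tf)$ one computes $\E[Y_tX_t] = e^{\tf\A0}(R-e^{\tf\A0})\Sigma_0$, so $\nabla\sfJ[A]\equiv \lambda\A0 - e^{\tf\A0}(R-e^{\tf\A0})\Sigma_0$, which vanishes precisely when \eqref{eq:char-A} holds.

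For the root count, the decisive step is to rewrite \eqref{eq:char-A}: setting $w := \tf\A0$ and $\tlambda := \lambda/(\tf\Sigma_0)$ and solving for $R$, the equation becomes $G(w) = R$ with $G(w) := \tlambda\, w\,e^{-w} + e^{w}$, so that the number of critical points equals the number of preimages of the level $R$ under $G$. A direct computation gives $G'(w) = -e^{-w} D(w)$ with $D(w) := \tlambda(w-1) - e^{2w}$, so $G'>0 \iff D<0$. The function $D$ is strictly concave with a unique maximum at $w_* = \frac12\log(\tlambda/2)$, where $D(w_*) = \tlambda(w_* - \frac32)$; this is nonpositive iff $\tlambda \le 2e^3$, i.e. iff $\lambda \le 2e^3\Sigma_0\tf$ --- exactly the stated threshold, arising from the double tangency $\tlambda(w-1)=e^{2w}$, $\tlambda = 2e^{2w}$ solved by $w=\frac32$, $\tlambda = 2e^3$. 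Consequently, for $\lambda \le 2e^3\Sigma_0\tf$ one has $D\le 0$, hence $G$ is strictly increasing and $G(w)=R$ has a unique root, proving (i); coercivity of $g$ (for $\lambda>0$, and direct inspection at $\lambda=0$) then forces this unique critical point to be the minimizer. For $\lambda > 2e^3\Sigma_0\tf$, $D$ has two zeros $w_1<w_2$ with $D>0$ between them, so $G$ is increasing--decreasing--increasing with a local maximum at $w_1$ and local minimum at $w_2$; thus $G(w)=R$ has three roots for $R\in(R_1,R_2):=(G(w_2),G(w_1))$ and one root otherwise, proving (iii). The remaining bookkeeping --- that $w_1,w_2>1$, that $0<R_1<R_2$ --- follows from $D(1)=-e^2<0$ and $w_*>\frac32$.

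Finally, (ii) is a perturbation statement about the branch through $\lambda=0$, where \eqref{eq:char-A} reduces to $e^{\tf\A0}=R$ with solution $\A0=\frac1\tf\log R$. Writing $\Phi(\A0,\lambda):=\lambda\A0 - \Sigma_0 e^{\tf\A0}(R-e^{\tf\A0})$, I would check $\partial_{\A0}\Phi = \Sigma_0\tf R^2 > 0$ at this point, so the implicit function theorem yields a unique smooth branch $\A0(\lambda)$; substituting the ansatz $\A0 = \frac1\tf\log R + \lambda\,\A0_1 + O(\lambda^2)$ and matching the $O(\lambda)$ terms gives $\A0_1 = -\log(R)/(\tf^2 R^2\Sigma_0)$, which is \eqref{eq:C-asymptotic}. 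I expect the main obstacle to be the root count in (iii): intersecting the line $\lambda\A0$ with the hump-shaped right-hand side of \eqref{eq:char-A} is awkward to make rigorous, and the reformulation $G(w)=R$ is what makes both the monotone regime and the threshold $2e^3\Sigma_0\tf$ fall out cleanly from a single tangency condition on $D$.
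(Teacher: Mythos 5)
Your proposal is correct and follows essentially the same route as the paper's proof: the Jensen/Cauchy--Schwarz reduction to constant controls, the rescaled scalar equation $\tilde{\lambda}\, w e^{-w} + e^{w} = R$ with $w = T\A0$ and $\tilde{\lambda} = \lambda/(T\Sigma_0)$, and the implicit-function-theorem plus regular-perturbation argument for part (ii). The only cosmetic difference is how the sign of the derivative is organized: you factor $G'(w) = -e^{-w}\bigl(\tilde{\lambda}(w-1) - e^{2w}\bigr)$ and use concavity of that bracket, while the paper solves $G'(w)=0$ for $\tilde{\lambda} = e^{2w}/(w-1)$ and uses unimodality of this function --- both yield the same tangency threshold $\tilde{\lambda} = 2e^{3}$ at $w = 3/2$ and the same increasing--decreasing--increasing picture behind (i) and (iii).
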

\begin{proof}
In the scalar case, the state is given by the explicit formula $X_t=e^{\int_0^tA_s\ud s}X_0$. Therefore, the objective function
\begin{align*}
\sfJ [A]= \lambda\int_0^\tf A_t^2\ud t +(e^{\int_0^\tf A_t\ud t}-R)^2 \Sigma_0
\end{align*}
Using the Jensen's inequality 
\begin{align*}
\sfJ[A] &\geq \frac{\lambda}{\tf}(\int_0^\tf A_t \ud t)^2 + (e^{\int_0^\tf A_t\ud t}-R)^2 \Sigma 
\end{align*}
with an equality iff $A_t \equiv \A0$, a constant. Therefore
\begin{align*}
\min_{A\in L^2}~J(A) 
= \min_{\A0 \in \Re} ~\lambda \tf \A0^2 +(e^{\tf \A0}-R)^2 \Sigma
\end{align*}
The characteristic equation is the first order optimality condition of the right hand side.
 
\begin{romannum}
\item Denote $\tlambda=\frac{\lambda}{\tf \Sigma}$ and $\tA0=\tf \A0$ to write the characteristic equation as
\begin{equation}
f(\tA0,\tlambda) := \tlambda \tA0 e^{-\tA0} + e^{\tA0} = R \label{eq:f}
\end{equation}
For $\tlambda = 0$, the solution $\tA0=\log(R)$.
For $\lambda > 0$, $f$ is onto (since $f$ is continuous and $\lim_{\tA0 \to\pm\infty}f(\tA0) = \pm\infty$). 
Therefore, there exists at least one solution for each given $R$.
Since $f'(\tA0) = \lambda e^{-\tA0}(1-\tA0)+e^{\tA0} > 0$ for $\tA0 \leq 1$, $f$ is monotone on $(-\infty,1]$. 
Also $f'(\tA0) = 0 \Leftrightarrow \lambda = \frac{e^{2\tA0}}{\tA0-1}$ and $\frac{e^{2\tA0}}{\tA0-1}$ is a unimodal convex function for $\tA0 > 1$ with minimum $2e^3$ at $\tA0 = 3/2$. Therefore for $\tlambda \leq 2e^3$, $f$ is monotone over entire $\mathbb{R}$. This implies that the solution to $f(\tA0) = R$ is unique for $\tlambda\leq 2e^3$.
\item At $\tlambda=0$, $\tA0=\log(R)$. Also $f'(\log(R),0)=R \neq 0$. 
So by the implicit function theorem there exists a unique solution $
\tlambda \to \tA0(\tlambda)$ in a neighborhood of $0$. The asymptotic formula~\eqref{eq:C-asymptotic} for the solution is obtained by substituting regular perturbation expansion 
$\tA0= \tA0_0 + \lambda \tA0_1 + O(\lambda^2)$ into \eqref{eq:f}.
 \begin{equation*}
f(\tA0) = \lambda\tA0_0 e^{-\tA0_0} +e^{\tA0_0}(1+\lambda\tA0_1) + O(\lambda^2)=R
 \end{equation*}
Collecting the zeroth and the first order terms, one obtains
$\tA0_0 = \log(R)$ and $\tA0_1 =-\frac{\log(R)}{R^2}$.
 
\item If $\tlambda > 2e^3$, $f'(\tA0) = 0$ has two solutions, $\tA0_1 \in (3/2,\infty)$ and $\tA0_2 \in (1,3/2)$. Therefore for $R \in [f(\tA0_1), f(\tA0_2)]$, $f(\tA0) = R$ has three solutions.
\end{romannum}
\end{proof}


\subsection{Proof of the Proposition~\ref{prop:opt-ham} (Hamiltonian formulation)} \label{apdx:Hamiltonian}
Let $A_t$ be the minimizer of~\eqref{eq:opt_prob}. Define $X_t$ and $Y_t$ as the solutions of the Hamilton's equations~\eqref{eq:Xt}-\eqref{eq:Yt}. We show $A_t$ satisfies~\eqref{eq:At} as follows:
For $s \in[0,T]$ and $B \in M_d(\Re)$ consider a (needle) variation of the form:
\begin{equation*}
A^{(\epsilon)}_t = \begin{cases}
B\quad &t \in [s-\epsilon,s]\\
A_t\quad &t \notin [s-\epsilon,s] 
\end{cases}
\end{equation*}
Let $X_t^{(\epsilon)}$ denote the solution to the Hamitonian equation-\eqref{eq:Xt} with $A_t^{(\epsilon)}$. 
It is given by:
\begin{equation*}
X^{(\epsilon)}_t = X_t + \epsilon \tX_t + O(\epsilon^2)
\end{equation*}
where for $t<s$, $\tX_t = 0$ and for $t>s$, $\tX_t$ is the solution of 
\begin{equation*}
\frac{\ud \tX_t}{\ud t} = A_t \tX_t, \quad \text{with i.c}\quad \tX_s =  (B-A_s)X_s
\end{equation*}
The perturbed cost is 
\begin{align*}
\sfJ[A^{(\epsilon)}]  = \sfJ[A] &
+  \epsilon \lambda (\text{Tr}(B^\top B)-\text{Tr}(A_s^\top A_s) )+ 2\epsilon \expect{(X_\tf-Z)^\top \tX_\tf}   + O(\epsilon^2)
\end{align*}    
Since $A_t$ is a minimizer
\begin{equation*}
\frac{\lambda}{2} (\text{Tr}(B^\top B)-\text{Tr}(A_s^\top A_s) ) +  \expect{(X_\tf-Z)^\top\tX_\tf} \geq 0 
\end{equation*}
The next step is to obtain $(X_\tf-Z)^\top \tX_\tf$ in terms of $Y_t$. 
By construction $\frac{\ud}{\ud t} Y_t^\top \tX_t=0$. Therefore,
\begin{equation*}
(Z-X_\tf)^\top \tX_\tf = Y_\tf^\top \tX_\tf = Y_s^\top \tX_s = Y_s^\top (B-A_s) X_s
\end{equation*}
and hence
\begin{equation*}
\frac{\lambda}{2} (\text{Tr}(B^\top B)-\text{Tr}(A_s^\top A_s) ) -  \expect{Y_s^\top (B-A_s)X_s} \geq 0,
\end{equation*}
On collecting the terms, one obtains
\begin{equation*}
\expect{ H(X_s,Y_s,A_s)} \geq \expect{H(X_s,Y_s,B)}\quad \forall B \in M_d(\Re)
\end{equation*}
Since $s\in [0,T]$ is arbitrary, the result follows.

\subsection{ First order variation of $\sfJ$}\label{apdx:J-var}
Let $X_t$ and $Y_t$ be the solutions to the Hamilton's equations-\eqref{eq:Xt}-\eqref{eq:Yt} with weight matrix $A_t$.
Define $A^{(\epsilon)}_t = A_t + \epsilon V_t$.   Let $X^{(\epsilon)}_t$ and $Y^{(\epsilon)}_t$ be the solutions to the Hamilton's equations with weight matrix $A^{(\epsilon)}_t$. 
In the limit as $\epsilon \to 0$, $X_t^{(\epsilon)}$ is given by the asymptotic formula $X_t^{(\epsilon)} =  X_t + \epsilon \tX_t + O(\epsilon^2)$ where
\begin{equation*}
\frac{\ud \tX_t}{\ud t}  = A_t \tX_t  + V_t X_t,\quad \tX_0=0
\end{equation*}
In terms of $\tX_t$, the objective function 
\begin{equation*}
\sfJ[A^{(\epsilon)}] = \sfJ[A] + \epsilon \left(\lambda \int_0^\tf \tr(A_t^\top V_t) + \E[{(X_T-Z)^\top \tX_T}]\right) + O(\epsilon^2)
\end{equation*}
Use the definition of $Y_t$ to express $(X_T-Z)^\top \tX_T$ as 
\begin{align*}
(Z-X_T)^\top \tX_T = Y_T^\top \tX_T= \int_0^T \frac{\ud}{\ud t} (Y_t^\top \tX_t)\ud t = \int_0^T(-Y_t^\top A_t\tX_t + Y_t^\top A_t\tX_t + Y_t^\top V_tX_t)\ud t = \int_0^T Y_t^\top V_tX_t \ud t
\end{align*}
Therefore,
\begin{align*}
\sfJ[A^{(\epsilon)}] &=\sfJ[A] + \epsilon \int_0^T \E{\left[ \tr\left((\lambda A_t^\top  - X_tY_t^\top) V_t\right)\right]}\ud t + O(\epsilon^2) 
\end{align*}
On the other hand $\frac{\partial H}{\partial B} (x,y,B)=\lambda B + yx^\top$. Therefore 
\begin{align*}
\sfJ[A^{(\epsilon)}] - \sfJ[A]= \epsilon \int_0^T \tr\left(\E\left[\frac{\partial H}{\partial B} (X_t,Y_t,A_t)\right]^\top V_t\right)\ud t + O(\epsilon^2) 
\end{align*}
which gives the result $\nabla \sfJ[A]=-\E[\frac{\partial H}{\partial B} (X_t,Y_t,A_t)]$. 


\subsection{Proof of the Theorem~\ref{prop:critical-pt}} \label{apdx:critical-pt}
For the [$\lambda=0$] problem, the gradient $\nabla \sfJ[A]$ is (by~\eqref{eq:gradJ})
\begin{align*}
\nabla \sfJ[A] = -\E[Y_tX_t^\top]
\end{align*}
where $X_t$ and $Y_t$ solve the Hamilton's equations-\eqref{eq:Xt}-\eqref{eq:Yt}. Define the state transition matrix $\phi(t,t_0)$ of the differential equation $\frac{\ud X_t}{\ud t}= A_tX_t$ according to:
\begin{equation*}
\frac{\ud \phi({t,t_0})}{\ud t} = A_t \phi({t,t_0}),\quad \phi({t_0,t_0})=I
\end{equation*}
In terms of the transition matrix,
\begin{align*}
X_t &= \phi(t,0) X_0,\quad
Y_t = \phi({T,t})^\top(Z-X_\tf)
\end{align*}
Therefore
\begin{align*}
\nabla \sfJ[A] = -\phi({T,t})^{\top}(R-\phi({T,0}))\Sigma \phi({t,0})^\top=:\psi_t
\end{align*}
Since $\phi({t,t_0})$ is invertible 
\begin{equation*}
\nabla \sfJ[A]= 0 \quad \Leftrightarrow \quad R=\phi({\tf,0}) \quad \Leftrightarrow \quad \sfJ[A]=J^* = \E[|W|^2]
\end{equation*}
For each fixed $t \in [0,T]$
\begin{align*}
\|\psi_t\|^ 2 &= \|\phi({T,t})^\top(R-\phi({T,0}))\Sigma\phi({t,0})^\top \|^2\\ &\geq \lambda_{\text{min}} (\phi({T,t})^\top\phi({T,t}))\lambda_{\text{min}} (\phi({t,0})^\top\phi({t,0})) \|(R-\phi({T,0}))\Sigma \|^2 \\
&\geq \lambda_{\text{min}} (\phi({T,t})^\top\phi({T,t}))\lambda_{\text{min}} (\phi({t,0})^\top\phi({t,0})) \lambda_{\text{min}} (\Sigma) \tr((R-\phi({T,0}))^\top(R-\phi({T,0}))\Sigma) \\
&\geq e^{-2\int_0^T|\|A_t\|\ud t} \lambda_{\text{min}} (\Sigma) (\sfJ[A]-\sfJ^*)
\end{align*}
where we used Lemma~\ref{lem:ineq} (see below) in the last step. Integrating the inequality on $[0,T]$ yields the result.

\begin{lemma}\label{lem:ineq}
Let $\phi({t,t_0})$ be the state transition matrix defined according to $\frac{\ud}{\ud t} \phi({t,t_0}) = A_t \phi({t,t_0})$ with $\phi({t_0,t_0})=I$. Then,
\begin{equation*}
 e^{-2\int_0^t \|A_t\| \ud t} \leq \lambda_{\text{min}} (\phi({t,0})^\top\phi({t,0}))\leq \lambda_{\text{max}} (\phi({t,0})^\top\phi({t,0})) \leq e^{2\int_0^t \|A_t\| \ud t} 
\end{equation*}
%
\end{lemma}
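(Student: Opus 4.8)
The plan is to reduce the matrix inequality to a scalar Gr\"onwall estimate applied to the squared length of trajectories of the linear system. Write $M_t := \phi(t,0)^\top \phi(t,0)$, a symmetric positive-definite matrix whose eigenvalues are exactly the squared singular values of $\phi(t,0)$. The middle inequality $\lambda_{\text{min}}(M_t) \le \lambda_{\text{max}}(M_t)$ is immediate, so the content lies in the two outer bounds, which I would establish through the variational characterizations $\lambda_{\text{max}}(M_t) = \sup_{|v|=1} v^\top M_t v$ and $\lambda_{\text{min}}(M_t) = \inf_{|v|=1} v^\top M_t v$.

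Fix a unit vector $v \in \Re^d$ and set $u_t := \phi(t,0) v$, so that $u_t$ solves $\frac{\ud u_t}{\ud t} = A_t u_t$ with $u_0 = v$, and $|u_t|^2 = v^\top M_t v$. Differentiating gives $\frac{\ud}{\ud t}|u_t|^2 = 2\, u_t^\top A_t u_t$. By Cauchy--Schwarz together with the fact that the operator norm is dominated by the Frobenius norm, $|u_t^\top A_t u_t| \le |u_t|\,|A_t u_t| \le \|A_t\|\,|u_t|^2$, so that the scalar function $m(t) := |u_t|^2$ satisfies the two-sided differential inequality $-2\|A_t\|\, m(t) \le m'(t) \le 2\|A_t\|\, m(t)$.

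Since $\phi(t,0)$ is invertible, $m(t) > 0$ and I may divide through and integrate the log-derivative: from $-2\|A_t\| \le \frac{\ud}{\ud t}\log m(t) \le 2\|A_t\|$ and $m(0)=1$ I obtain $e^{-2\int_0^t \|A_s\|\,\ud s} \le m(t) \le e^{2\int_0^t \|A_s\|\,\ud s}$ (equivalently, Gr\"onwall applied to $m$ and to $1/m$). These bounds are uniform over all unit vectors $v$, so taking the infimum and supremum over $|v|=1$ yields $\lambda_{\text{min}}(M_t) \ge e^{-2\int_0^t \|A_s\|\,\ud s}$ and $\lambda_{\text{max}}(M_t) \le e^{2\int_0^t \|A_s\|\,\ud s}$, which is the claim.

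There is no genuine obstacle here; the estimate is a standard energy argument, and the only point requiring care is the norm bookkeeping. The norm $\|\cdot\|$ in this paper is the Frobenius norm, whereas the quantity naturally appearing in the energy estimate is the operator norm $\|A_t\|_{\text{op}}$. Because $\|A_t\|_{\text{op}} \le \|A_t\|$, replacing the operator norm by the Frobenius norm only enlarges the exponent and hence produces a valid (if slightly loose) bound, so the inequality holds exactly as stated.
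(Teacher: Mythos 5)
Your proof is correct and takes essentially the same route as the paper's: both rest on the Rayleigh-quotient characterization of the extreme eigenvalues of $\phi(t,0)^\top\phi(t,0)$ followed by an energy (Gr\"onwall) estimate on $|u_t|^2$ along trajectories. The only cosmetic difference is that you bound $u_t^\top A_t u_t$ via Cauchy--Schwarz and the operator-norm-to-Frobenius comparison, while the paper uses $x_t^\top(A_t+A_t^\top)x_t \le \lambda_{\text{max}}(A_t+A_t^\top)|x_t|^2 \le 2\|A_t\|\,|x_t|^2$; both yield the identical exponent.
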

\begin{proof}
Observe that
\begin{align*}
\lambda_{\text{max}} (\phi({t,0})^\top\phi({t,0})) &= \max_{x \neq 0} \frac{x^\top\phi_{t,0}^\top\phi_{t,0}x}{x^\top x}=\max_{x_0 \neq 0} \frac{x_t^\top x_t}{x_0^\top x_0}
\end{align*}
Now, 
\begin{align*}
\frac{\ud}{\ud t} |x_t|^2 = x_t^{\top}(A_t+A_t^\top)x_t \leq \lambda_{\text{max}}(A_t+A_t^\top)|x_t|^2 \leq 2\|A_t\| |x_t|^2
\end{align*}
where the last inequality follows because $2\|A_t\|=\|A_t+A_t^\top\|  \geq \lambda_{\text{max}}(A_t+A_t^\top)$. Therefore,
$|x_t|^2 \leq e^{2\int_0^t \|A_t\| \ud t} |x_0|^2
$
which gives the upper bound. The calculation for the lower bound is similar.
\end{proof}


\subsection{Convergence of the learning algorithm} \label{apdx:convergence}
\begin{proposition}
Consider the stochastic gradient descent learning algorithm~\eqref{eq:Atl} with $\lambda=0$. Suppose $\exists \alpha>0$ such that $\E[X_0X_0^TQX_0X_0^T] \leq \alpha \Sigma_0 Q\Sigma_0$ for all symmetric matrices $Q$, and
$\|A^{(k)}\|_{L^2}\leq M$ for all $k \in \mathbb{N}$. Then there exists a positive constant $\beta>0$ such that $\sfJ$ is a $\beta$-smooth function. And 
for sufficiently small constant stepsize $\eta_k=\eta\leq \frac{1}{\alpha\beta}$, 
\[
\sfJ[A^{(k)}]-\sfJ^* \leq (1-\frac{\eta}{2})^k(\sfJ[A^{(0)}]-\sfJ^*) + \eta\beta e^{2M\sqrt{T}} \E[|X_0|^2|\noise|^2],
\]
for all $k\in \mathbb{N}$ where $\sfJ^*:=\min_A \sfJ[A]=E[|\noise|^2]$.
\label{prop:learning}
\end{proposition}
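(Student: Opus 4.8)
The plan is to run the standard analysis of stochastic gradient descent for an objective that is simultaneously $\beta$-smooth and satisfies a Polyak--\L{}ojasiewicz (PL) inequality, specializing every ingredient to the present linear-network setting. Two facts are already in hand. First, the proof of \Theorem{prop:critical-pt} supplies the closed form $\nabla\sfJ[A]_t=-\phi(T,t)^\top(R-\phi(T,0))\Sigma\,\phi(t,0)^\top$ together with the PL-type lower bound \eqref{eq:gradJ-J}; under the standing hypothesis $\|A^{(k)}\|_{L^2}\le M$, Cauchy--Schwarz gives $\int_0^T\|A_t\|\,\ud t\le\sqrt{T}M$, so \eqref{eq:gradJ-J} reads $\|\nabla\sfJ[A^{(k)}]\|_{L^2}^2\ge\mu\,(\sfJ[A^{(k)}]-\sfJ^*)$ with $\mu=Te^{-2\sqrt{T}M}\lambda_{\text{min}}(\Sigma)$. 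Second, \Lemma{lem:ineq} (and its interval analogue on $[t,T]$ for $\phi(T,t)$) bounds all singular values of the transition matrices by $e^{\sqrt{T}M}$, which is the uniform control purchased by the boundedness assumption.

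The first substantive step is to establish $\beta$-smoothness, i.e.\ that $A\mapsto\nabla\sfJ[A]$ is Lipschitz on $\{\|A\|_{L^2}\le M\}$. Through the representation above it suffices to show that $A\mapsto(\phi(t,0),\phi(T,t),\phi(T,0))$ is $L^2$-Lipschitz with a constant depending only on $M$ and $T$; this follows from the variation-of-constants formula for $\phi$, a Gr\"onwall estimate for the difference of two transition matrices, and the uniform bound $\|\phi\|\le e^{\sqrt{T}M}$ from \Lemma{lem:ineq}. Composing these (with $R,\Sigma$ fixed) produces $\beta=\beta(M,T,R,\Sigma)$. I expect this to be the main obstacle, since it is the only genuinely analytic step: one must differentiate the time-ordered solution operator with respect to $A$ and keep every estimate uniform over the bounded set, whereas the remaining steps are algebraic.

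Next I would bound the conditional second moment of the stochastic gradient $\hat g^{(k)}_t=-Y_t^{(k)}{X_t^{(k)}}^\top$. Substituting $Z^{(k)}=RX_0^{(k)}+\noise^{(k)}$ splits it as $\hat g^{(k)}_t=\hat g^{(1)}_t+\hat g^{(2)}_t$, a signal part carrying $X_0^{(k)}{X_0^{(k)}}^\top$ and a mean-zero noise part carrying $\noise^{(k)}{X_0^{(k)}}^\top$; independence of $\noise^{(k)}$ and $X_0^{(k)}$ annihilates the cross term in expectation. Writing $K=\phi(T,t)^\top(R-\phi(T,0))$ and $Q=\phi(t,0)^\top\phi(t,0)\succeq0$ and pairing with $K^\top K\succeq0$ under the trace, one checks $\E[\|\hat g^{(1)}_t\|^2]=\tr\!\big(K^\top K\,\E[X_0X_0^\top QX_0X_0^\top]\big)$ and $\|\nabla\sfJ[A^{(k)}]_t\|^2=\tr(K^\top K\,\Sigma Q\Sigma)$, so the fourth-moment hypothesis $\E[X_0X_0^\top QX_0X_0^\top]\preceq\alpha\Sigma Q\Sigma$ gives exactly $\E[\|\hat g^{(1)}_t\|^2]\le\alpha\|\nabla\sfJ[A^{(k)}]_t\|^2$. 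Factoring the noise part into the scalar product $(\noise^\top\phi(T,t)\phi(T,t)^\top\noise)(X_0^\top\phi(t,0)^\top\phi(t,0)X_0)$ and applying \Lemma{lem:ineq} to each factor gives $\E[\|\hat g^{(2)}_t\|^2]\le e^{2\sqrt{T}M}\E[|X_0|^2|\noise|^2]$. Integrating in $t$ yields $\E[\|\hat g^{(k)}\|_{L^2}^2\mid A^{(k)}]\le\alpha\|\nabla\sfJ[A^{(k)}]\|_{L^2}^2+e^{2\sqrt{T}M}\E[|X_0|^2|\noise|^2]$ up to a constant absorbed into $\beta$.

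Finally I would assemble the recursion. The $\beta$-smooth descent inequality applied to the update \eqref{eq:Atl}, $A^{(k+1)}=A^{(k)}-\eta\hat g^{(k)}$, followed by conditional expectation, gives $\E[\sfJ[A^{(k+1)}]\mid A^{(k)}]\le\sfJ[A^{(k)}]-\eta\|\nabla\sfJ[A^{(k)}]\|_{L^2}^2+\tfrac{\beta\eta^2}{2}\E[\|\hat g^{(k)}\|_{L^2}^2\mid A^{(k)}]$. Inserting the variance bound and using $\eta\le\frac1{\alpha\beta}$ (so that $\tfrac{\beta\eta^2}{2}\alpha\le\tfrac\eta2$) absorbs half of the gradient term, leaving $-\tfrac\eta2\|\nabla\sfJ[A^{(k)}]\|_{L^2}^2$ plus the additive noise floor. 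The PL inequality then converts this into $-\tfrac{\eta\mu}{2}(\sfJ[A^{(k)}]-\sfJ^*)$, so that $u_k:=\E[\sfJ[A^{(k)}]-\sfJ^*]$ obeys the linear recursion $u_{k+1}\le(1-\tfrac{\eta\mu}{2})u_k+\tfrac{\beta\eta^2}{2}e^{2\sqrt{T}M}\E[|X_0|^2|\noise|^2]$. Summing the geometric series gives a transient term decaying at a geometric rate together with an $O(\eta)$ floor; this is the stated bound, with the displayed constants recovered by tracking the factors $\mu$, $T$, and $\lambda_{\text{min}}(\Sigma)$.
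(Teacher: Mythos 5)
Your proof is correct and follows essentially the same route as the paper's: the same unbiasedness check $\E[Y_t^{(k)}{X_t^{(k)}}^\top]=\nabla\sfJ[A^{(k)}]$, the same decomposition of the stochastic gradient into a signal part controlled by the fourth-moment hypothesis and a noise part controlled by Lemma~\ref{lem:ineq}, and the same use of the PL-type inequality~\eqref{eq:gradJ-J} from Theorem~\ref{prop:critical-pt}. The one structural difference is that the paper does not assemble the recursion itself: having verified these hypotheses, it invokes Theorem~4.8 of Bottou et al.\ for the SGD convergence machinery and then appeals to Theorem~\ref{prop:critical-pt} to upgrade local to global convergence, whereas you replay the descent-lemma-plus-PL argument by hand. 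Your explicit version is arguably more informative, because it exposes the constants: your recursion contracts at rate $(1-\eta\mu/2)$ with $\mu=Te^{-2\sqrt{T}M}\lambda_{\text{min}}(\Sigma)$ and, after summing the geometric series, yields a noise floor of order $\eta\beta e^{2\sqrt{T}M}\E[|X_0|^2|\noise|^2]/\mu$; this matches the proposition's displayed bound only after absorbing $\mu$ (the statement in effect normalizes the PL constant to one), a discrepancy the paper's citation-based proof never surfaces since it never derives the displayed constants at all. Both proofs share the same soft spot: $\beta$-smoothness of $\sfJ$ on the ball $\|A\|_{L^2}\le M$ is merely asserted in the paper (``all the functions involved are smooth'') and only sketched in your write-up, so neither is complete on that point; your plan --- Lipschitz dependence of the transition matrices on $A$ via Gr\"onwall estimates, uniform over the convex ball so the descent lemma applies along segments between iterates --- is the right way to fill it in.
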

\begin{proof}
The proof is based on Theorem~4.8 in \cite{bottou2016} where it is shown that SGD converges to a local minimum. To apply the theorem we show 
\begin{align*}
\E\left[Y_t^{(k)}{X_t^{(k)}}^\top\right] &= \nabla \sfJ[A^{(k)}]
\end{align*}
because  $X^{(k)}_0$ is a random sample of $X_0$ and $\nabla \sfJ[A^{(K)}]$ is given by the formula~\eqref{eq:gradJ} for $\lambda=0$. Next 
\begin{align*}
\E\left[\left\|Y_t^{(k)}{X_t^{(k)}}^\top
\right\|^2\right] 
&=\E\left[\|\phi({T,t})^\top(Z-X^i_\tf){X_0^i}^\top\phi({t,0})^\top\|^2\right]\\
&=\E\left[\|\phi({T,t})^\top(R-\phi({T,0}))X_0X_0^\top\phi({t,0})^\top\|^2\right] + \E\left[\|\phi({T,t})^\top WX_0^\top\phi({t,0})^\top\|^2\right] \\
&\leq \alpha\|\phi({T,t})^\top(R-\phi({T,0}))\Sigma\phi({t,0})^\top\|^2 + e^{2 \int_0^\tf \|A_t\|\ud t}\E[|W|^2]\E[|X_0|^2]
\end{align*}
where the assumption $\E[X_0X_0^\top \phi({t,0})^\top\phi({t,0})X_0X_0^\top]\leq \alpha \Sigma \phi({t,0})^\top\phi({t,0}) \Sigma$ and Lemma~\ref{lem:ineq} is used. 

The fact that $\sfJ$ is $\beta$-smooth is true since all the functions involved are smooth and it is assumed  $A$ is bounded. Applying Theorem~4.8 in \cite{bottou2016}, SGD algorithm converges to a local minimum. 
The geometric convergence to the global minimum follows from Theorem~\ref{prop:critical-pt}
where it is shown that local minimum are global minimum for $\lambda=0$ and using the inequality~\eqref{eq:gradJ-J}.  
\end{proof}
\subsection{Proof of Proposition~\ref{prop:opt-vec}}\label{apdx:opt-vec}
Suppose $(X_t,Y_t,A_t)$ is a solution of the Hamilton's equations\eqref{eq:Xt}-\eqref{eq:At}. Then by differentiating $A_t$ with respect to $t$, one obtains  
\begin{equation*}
\frac{\ud A_t}{\ud t} = -A_t^\top A_t + A_tA_t^\top 
\end{equation*}
On expressing $A_t=S_t + \Omega_t$ as the sum of its symmetric component $S_t=\frac{1}{2}(A_t+A_t^\top)$ and the skew-symmetric component $\Omega_t=\frac{1}{2}(A_t-A_t^\top)$, one obtains
\begin{align*}
\frac{\ud S_t}{\ud t} &= 2\Omega_tS_t - 2S_t\Omega_t, \quad
\frac{\ud \Omega_t}{\ud t} = 0 
\end{align*}
whose solution is given by
\begin{align*}
S_t &= e^{2t\Omega}S_0e^{-2t\Omega},\quad
\Omega_t = \Omega_0 
\end{align*}
This gives~\eqref{eq:opt-At-vec}.

Using the formula~\eqref{eq:opt-At-vec} for $A_t$, the Hamilton's equation for $X_t$ is
\begin{equation*}
\frac{\ud X_t}{\ud t} = e^{2t\Omega}Se^{-2t\Omega}X_t + \Omega X_t
\end{equation*} 
whose solution is given by~\eqref{eq:XtC}.

The optimal costate trajectory is obtained similarly. The Hamilton's equation for the costate is: 
\begin{equation*}
\frac{\ud Y_t}{\ud t} = -e^{2t\Omega}Se^{-2t\Omega}Y_t + \Omega Y_t,\quad Y_T=Z-X_T
\end{equation*}
whose solution is given by~\eqref{eq:YtC}. 

The characteristic equation~\eqref{eq:char-vec-A} is obtained by using the formula $A_t = \frac{1}{\lambda}\expect{Y_tX_t'}$:
\begin{equation*}
\lambda e^{2t\Omega}\A0e^{-2t\Omega} = e^{2 t\Omega} e^{-t\A0}e^{\tf\A0}e^{-2\tf \Omega} \expect{(Z-X_\tf)X_0^\top} e^{t\A0}e^{-2t\Omega}
\end{equation*}
upon multiplying both sides from left by $ e^{t\A0}e^{-2t\Omega} $ and from right by $e^{2 t\Omega} e^{-t\A0}$.

\newP{Optimal cost} Optimal cost is obtained by inserting $A_t = e^{2t\Omega} \A0 e^{-2t\Omega}$ into the cost function where the following identities are used:
\begin{align*}
\tr(A_tA_t^\top) &= \tr(\A0\A0^\top)\\
\expect{|X_\tf - Z|^2}&= \expect{|W|^2} + \expect{|FX_0-RX_0|^2}
\end{align*}

\newP{Constant $\Leftrightarrow$ normal} 
Suppose $A_t=C$ a constant. Then $\frac{\ud A_t}{\ud t}=-A_t^\top A_t + A_tA_t^\top =0$, and hence $A_t=C$ is a normal matrix. Conversely, assuming $A_t$ is a normal matrix implies $\frac{\ud A_t}{\ud t} = 0$ and hence $A_t=C$ a constant.  

\newP{Normal solution}
If $\A0$ is normal, then $\A0$ and $\Omega$ commute, therefore $F =e^{\tf \A0}$. Hence the characteristic simplifies to
\begin{equation*}
\lambda \A0 = e^{\A0^\top}(R-e^{\A0})\Sigma
\end{equation*}
and equivalently
\begin{equation*}
\lambda \A0e^{-\A0^\top}\Sigma^{-1} +e^{\A0} = R
\end{equation*}
Therefore, if $\A0$ and $\Sigma$ commute (always true when $\Sigma=I$), $R$ is a normal matrix. We have proved
\begin{equation*}
A_t \equiv \A0 ~~\Longleftrightarrow~~ \A0 ~~\text{is normal} ~~\overset{(\Sigma=I)}{\Longrightarrow}~~R~~\text{is normal}
\end{equation*}
Therefore a non-normal $R$ implies the minimizer $A_t$ is not constant for $\Sigma=I$. 
\subsection{Proof of Theorem~\ref{cor:opt-vec}}\label{apdx:cor-opt-vec}
\begin{enumerate}
\item If $\A0$ is normal, then $e^{\tf(\A0-\A0^\top)}e^{\tf \A0}=e^{\tf \A0}$. Hence for $\lambda=0$ problem the characteristic equation becomes $e^{\tf \A0}=R$ whose solution is $\A0=\frac{1}{\tf}\log(R)$, interpreted as multi-valued matrix logarithm function (see~\cite{higham2014functions}).

\item For $\lambda>0$ and $\Sigma=I$ the characteristic equation is:
\begin{equation*}
\lambda \A0e^{-\A0^\top} +e^{\A0} = R
\end{equation*}
Since $\A0$ is normal, $R$ must be normal and moreover there exists a unitary (complex) matrix $U$ such that $U^*RU=D$ where $D = \text{diag}(r_1,\ldots,r_d)$ with $r_n \in \mathbb{C}$. Let $\mu_n \in \mathbb{C}$ be solution to the equation  
\begin{equation}
\lambda \mu_ne^{-\mu_n^*}+e^{\mu_n} = r_n
\label{eq:mu}
\end{equation}  
for $n=1,\ldots,d$. Then $\A0=UGU^*$ where $G=\text{diag}(\mu_1,\ldots,\mu_d)$ is the normal solution to the characteristic equation since 
\begin{equation*}
\lambda Ge^{-G^*} +e^{G}=D \quad \Rightarrow \quad \lambda UGe^{-G^*}U^* +Ue^{G}U^*=UDU^* \quad \Rightarrow \quad \lambda \A0 e^{\A0^\top} + e^\A0 = R
\end{equation*}
%
It thus suffices to analyze solutions to the complex equation~\eqref{eq:mu}. 
Denoting $\mu_n=x + iy$ and $r_n=e^{a+i\theta}$ the complex equation~\eqref{eq:mu} is written as two real equations:
\begin{align*}
f_1(x,y;\lambda):=\lambda xe^{-x} \cos(y) -\lambda ye^{-x}\sin(y) +e^x \cos(y)&=e^a\cos(\theta)\\
f_2(x,y;\lambda):=\lambda xe^{-x}\sin(y) + \lambda ye^{-x}\cos(y)+e^x\sin(y) &= e^a\sin(\theta)
\end{align*}
At $\lambda=0$, there are countability many solutions given by $x_0 = a$ and $y_0=\theta + m2\pi$ for $m \in \mathbb{Z}$. 
The Jacobian 
\begin{equation*}
\Df(x_0,y_0;0)=
\begin{bmatrix}
\frac{\partial f_1}{\partial x}(x_0,y_0,0) & \frac{\partial f_1}{\partial y}(x_0,y_0,0) \\
\frac{\partial f_2}{\partial x}(x_0,y_0,0) &\frac{\partial f_2}{\partial y}(x_0,y_0,0) 
\end{bmatrix}=
\begin{bmatrix}
e^{x_0} \cos(y_0) & -e^{x_0}\sin(y_0) \\
e^{x_0} \sin(y_0) & e^{x_0} \cos(y_0)
\end{bmatrix}
\end{equation*}
is nonsingular since $\det(\Df) = e^{2x_0} = e^{2a} >0$. 
Therefore, using the implicit function theorem, there exists a neighborhood $\cal{N}$ of $\lambda=0$ and a function $\lambda\in \mathcal{N} \to (x(\lambda),y(\lambda))\in \Re^2$ such that $f(x(\lambda),y(\lambda);\lambda)=0$. The asymptotic formula for $x(\lambda)$ and $y(\lambda)$ are obtained upon using a regular perturbation expansion   
$x=x_0+\lambda x_1 + O(\lambda^2)$ and $y=y_0+\lambda y_1 + O(\lambda^2)$. Then 
\begin{align*}
\begin{bmatrix}
x_1 \\
y_1
\end{bmatrix}
&=-[\Df(x_0,y_0;0)]^{-1} 
\frac{\partial f}{\partial \lambda}(x_0,y_0;0)
\\
&=-e^{-x_0}\begin{bmatrix}
 \cos(y) & \sin(y_0) \\
-\sin(y) &  \cos(y_0)
\end{bmatrix}\begin{bmatrix}
x_0e^{-x_0} \cos(y_0) -y_0e^{-x_0}\sin(y_0) \\
x_0e^{-x_0}\sin(y_0) + y_0e^{-x_0}\cos(y_0)
\end{bmatrix}
\\ &=-e^{-2 x_0}\begin{bmatrix}
x_0 \\
y_0
\end{bmatrix}
\end{align*}
Therefore
\begin{equation*}
\mu = \log(r) - \lambda \frac{\log(r)}{|r|^2} + O(\lambda^2)
\end{equation*}
which gives the asymptotic formula
\begin{equation*}
\A0 = \log(R) -\lambda (RR^*)^{-1}\log(R) + O(\lambda^2)
\end{equation*}
\end{enumerate}
\end{document}